\documentclass[11pt,english]{smfart}

\usepackage{amsmath,leftidx,amsthm,sfheaders}
\usepackage[all]{xy}
\usepackage{xspace,smfthm}
\usepackage[psamsfonts]{amssymb}
\usepackage[latin1]{inputenc}
\usepackage{graphicx,color}
\usepackage{hyperref,fancyhdr}

\newtheorem*{remark}{\bf Remark}
\newtheorem*{question}{\bf Question}

\newtheorem{theorem}{\bf Theorem}[section]
\newtheorem{proposition}[theorem]{\bf Proposition}
\newtheorem{definition}[theorem]{\bf Definition}
\newtheorem{Theorem}{\bf Theorem}

\newtheorem{lemma}[theorem]{\bf Lemma}

\def\C{{\mathbb C}}
\def\N{{\mathbb N}}

\def\P{\mathbb{P}}

\def\supp{\textup{supp}}

\def\bif{\textup{bif}}

\def\Mand{\mathbf{M}}

\def\ul{{\underline{l}}}

\topmargin 0cm
\oddsidemargin 0.5cm
\evensidemargin 0.5cm
\textheight 22cm
\textwidth 15cm

\def\and{{\quad\text{and}\quad}}

\title{The bifurcation measure has maximal entropy}
\author{Henry De Thélin}
\address{LAGA, UMR 7539, Institut Galilée, Université Paris 13, 99 avenue J.B. Clément, 93430 Villetaneuse, France}
\email{dethelin@math.univ-paris13.fr}
\author{Thomas Gauthier}
\address{LAMFA UMR 7352, Universit\'e de Picardie Jules Verne, 33 rue Saint-Leu, 80039 AMIENS Cedex 1, FRANCE}
\email{thomas.gauthier@u-picardie.fr}
\author{Gabriel Vigny}
\address{LAMFA UMR 7352, Universit\'e de Picardie Jules Verne, 33 rue Saint-Leu, 80039 AMIENS Cedex 1, FRANCE}
\email{gabriel.vigny@u-picardie.fr}
\thanks{The second and third authors' research is partially supported by the ANR grant Fatou ANR-17-CE40-0002-01.}

\begin{document}

\begin{abstract} Let $\Lambda$ be a complex manifold and let $(f_\lambda)_{\lambda\in \Lambda}$ be a holomorphic family of rational maps of degree $d\geq 2$ of $\P^1$. We define a natural notion of entropy of bifurcation, mimicking the classical definition of entropy, by the parametric growth rate of critical orbits. We also define a notion a measure-theoretic bifurcation entropy for which we prove a variational principle:  the measure of bifurcation is a measure of maximal entropy. We rely crucially on a generalization of Yomdin's bound of the volume of the image of a dynamical ball.
	
Applying our technics to complex dynamics in several variables, we notably  define and compute the entropy of the trace measure of the Green currents of a holomorphic endomorphism of $\P^k$.
\end{abstract}

\maketitle

\noindent \textbf{Keywords.} Families of rational maps, bifurcation currents and measure, entropy of bifurcation, entropy of rational maps
\medskip

\noindent \textbf{Mathematics~Subject~Classification~(2010):}
37B40, 28D20, 37F45, 37F10.

\tableofcontents

\section{Introduction}
Let $f :\P^k\to \P^k$ be a holomorphic endomorphism of degree $d\geq 2$. The ergodic study of $f$ is well understood: 
\begin{itemize}
	\item  Gromov \cite{gromov_enseignement} showed that the topological entropy of $f$ is $\leq k\log d$. 
\item Forn{\ae}ss and Sibony defined a \emph{Green measure} $\mu_f$ of $f$ as the maximal self-intersection of the \emph{Green current} $T_f$ of $f$. The current $T_f$ is an invariant positive closed current of bidegree $(1,1)$ and mass $1$ whose support is the \emph{Julia set}, the set where the dynamics is chaotic. They showed that $\mu_f$  is mixing (\cite{forsib95}) and has maximal entropy $k\log d$.
\item Briend and Duval then showed that $\mu_f$ is hyperbolic (the Lyapunov exponents are positive) and $\mu_f$ equidistributes the repulsive cycles \cite{briendduval}. Furthermore, $\mu_f$ is the unique measure of maximal entropy \cite{briendduval2}.
\end{itemize}
More generally, for a dominant meromorphic map of a compact Kähler manifold, one want to construct a measure of maximal entropy, to show that it is hyperbolic and that it equidistributes saddle cycles (e.g. for complex Hénon maps, this is done in \cite{BS1, bedfordsmillie3, BLS}). \\

On the other hand,  let now $\Lambda$ be a complex Kähler  manifold and let $\hat{f}:\Lambda \times \P^1\to\Lambda\times\P^1$ be a holomorphic family of rational maps of degree $d \geq 2$: $\hat{f}$ is holomorphic and $\hat{f}(\lambda,z)=(\lambda, f_\lambda(z)) $ where  $f_\lambda$ is a rational map of degree $d$. 
Though the object of study is the notion of $J$-stability, this situation shares many similarity with the iteration of a holomorphic  map of $\P^k$.

Indeed,  DeMarco \cite{DeMarco1} introduced a \emph{current of bifurcation} $T_\bif$ on $\Lambda$, it is a positive closed current of bidegree $(1,1)$ whose support is exactly the unstability locus (the closure of the set where the Julia set does not move continuously) and it is defined has $dd^c L$ where $L$ is the Lyapunov function. Bassanelli and Berteloot \cite{BB1} then defined its self-intersections $T_\bif^l$, the maximal intersection $\mu_{\bif}:=T_\bif^{\dim(\Lambda)}$ is known as the \emph{bifurcation measure}.
 Parallel to the equidistribution of repulsive cycles, several authors have proved various equidistribution properties of specific dynamical parameters towards $\mu_{\bif}$: parameters having a maximal numbers of periodic cycles of given multipliers letting the periods go to $\infty$, strictly post-critically finite parameters letting the preperiods/periods go to $\infty$ (e.g. \cite{FRL, BB2, favregauthier, distribGV, distribGV2, GOV}).

Is it possible to continue the analogy and show that $\mu_{\bif}$ is a measure of maximal entropy? This is the main goal of this paper. Of course, this requires to define a notion of entropy in this situation. \\

To do that, we assume that the family is critical marked: the $2d-2$ critical points can be followed holomorphically (this is always possible up to taking a finite branched cover of $\Lambda$). In other words, there exist holomorphic maps $c_1,\ldots,c_{2d-2} :\Lambda \to \P^1$ with  $f'_\lambda(c_j(\lambda))=0$ and the critical set of $f_\lambda$ is the collection, with multiplicity, $(c_1(\lambda), \dots, c_{2d-2}(\lambda))$.

 For $n \in \N$, we consider the \emph{$n$-bifurcation distance} on $\Lambda$ defined by
$$d_{n}(\lambda,\lambda'):= \max_{1\leq j\leq 2d-2} \max_{0\leq q\leq n-1}  d\left(f^q_\lambda\left(c_{j}(\lambda)\right),f^q_{\lambda'}\left(c_{j}(\lambda')\right)\right),$$
where $d(x,y)$ denotes the Fubini-Study distance on $\P^1$. We say that a set $E \subset \Lambda$ is $(d_{n},\varepsilon)$-separated if :
$$ \min_{\lambda,\lambda'\in E, \ \lambda \neq \lambda'} d_{n}(\lambda,\lambda')\geq \varepsilon. $$
\begin{definition}
	Let $K \subset \Lambda$ be a compact set. We define $h_{\bif}(\hat{f},K)$, the \emph{bifurcation entropy}  of the family $\hat{f}$ in $K$, as the quantity 
	$$h_{\bif}(\hat{f},K):= \lim_{\varepsilon \to 0}  \ \limsup_n \ \frac{1}{n} \log \max \left\{ \mathrm{card}(E), \ E\subset K \ \mathrm{is} \  (d_{n},\varepsilon)-\mathrm{separated} \right\}. $$
	We let  $h_{\bif }(\hat{f}):= \sup_K h_{\bif}(\hat{f},K)$ be the \emph{bifurcation entropy} of the family $\hat{f}$.
\end{definition}
A priori, $h_{\bif}(\hat{f},K) \in [0,+\infty ]$, but refining an argument of \cite{gromov_enseignement} on the growth rate of the volume of the graph, we first show the following bound of the bifurcation entropy outside the support of $T^l_\bif$. 
\begin{Theorem}\label{Majoration}
Pick $1\leq l\leq \dim(\Lambda)$ and $K\Subset\Lambda$. If $K\cap \supp(T_\bif^l)=\varnothing$, then
\[h_\bif(\hat{f},K)\leq (l-1)\cdot \log d.\]
\end{Theorem}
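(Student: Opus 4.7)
The plan is to bound $h_\bif(\hat f,K)$ via a Gromov-type estimate on the growth of the volume of the critical-orbit graph. For each $n\ge 1$, writing $F^q_j(\lambda):=f^q_\lambda(c_j(\lambda))$, consider the holomorphic embedding
\[\Phi_n\colon K\longrightarrow\Lambda\times(\P^1)^{n(2d-2)},\qquad \Phi_n(\lambda)=\bigl(\lambda,\,(F^q_j(\lambda))_{1\leq j\leq 2d-2,\,0\leq q\leq n-1}\bigr),\]
and let $\Gamma_n:=\Phi_n(K)$, $\Omega_n:=\sum_{j,q}(F^q_j)^*\omega_{\mathrm{FS}}$. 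Any $(d_n,\varepsilon)$-separated set $E\subset K$ maps to a subset of $\Gamma_n$ whose points are pairwise $\geq\varepsilon$-apart after projection to $(\P^1)^{n(2d-2)}$, so the standard Lelong lower bound on volumes of complex submanifolds intersected with small balls gives
\[\#E\;\leq\;C\,\varepsilon^{-2k}\,\mathrm{Vol}(\Gamma_n)\;=\;C\,\varepsilon^{-2k}\!\int_K\!(\omega_\Lambda+\Omega_n)^k,\qquad k:=\dim\Lambda.\]
The theorem will follow from a bound $\mathrm{Vol}(\Gamma_n)\leq C_k\,n^{k}\,d^{n(l-1)}$, since taking $\tfrac{1}{n}\log$ and letting $\varepsilon\to 0$ and $n\to\infty$ then yields $h_\bif(\hat f,K)\leq(l-1)\log d$.

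The multinomial expansion reduces the problem to bounding each $\int_K\omega^{k-m}\wedge\Omega_n^m$ for $0\leq m\leq k$. Recall that $d^{-q}(F^q_j)^*\omega_{\mathrm{FS}}$ converges weakly as $q\to\infty$ to the partial bifurcation current $T_j:=dd^cg_j$ attached to the marked critical point $c_j$, and in particular has uniformly bounded mass on any $K'\Supset K$. For $m\leq l-1$, a Chern--Levine--Nirenberg iteration immediately gives
\[\int_K\omega^{k-m}\wedge\Omega_n^m \;\leq\; C\!\!\sum_{(q_i,j_i)_{i=1}^m}d^{q_1+\cdots+q_m}\;\leq\;C\,d^{nm}\;\leq\;C\,d^{n(l-1)},\]
so the entire difficulty lies in the range $m\geq l$, where the naive bound overshoots by a factor $d^{n(m-l+1)}$ and the hypothesis $K\cap\supp(T_\bif^l)=\varnothing$ must be invoked in an essential way.

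To use it, observe that in the critically marked setting $T_\bif=\sum_jT_j$ (up to a positive multiplicative constant), and each mixed wedge $T_{j_1}\wedge\cdots\wedge T_{j_s}$ is well defined and positive because the Green potentials $g_j$ are continuous. Vanishing of $T_\bif^l$ on a neighborhood $U\supset K$ therefore forces $T_{j_1}\wedge\cdots\wedge T_{j_s}=0$ on $U$ for every $s\geq l$ and every choice of indices. Writing $d^{-q}(F^q_j)^*\omega_{\mathrm{FS}}=T_j+dd^c\phi^q_j$ with $\|\phi^q_j\|_{L^\infty(U')}\leq Cd^{-q}$ (the standard exponential convergence of the Green potentials, obtained by telescoping the defining recursion), one expands
\[\bigwedge_{i=1}^m\bigl(T_{j_i}+dd^c\phi^{q_i}_{j_i}\bigr)\;=\;\sum_{S\subset\{1,\dots,m\}}\bigwedge_{i\in S}dd^c\phi^{q_i}_{j_i}\wedge\bigwedge_{i\notin S}T_{j_i}.\]
The summands with $|S|\leq m-l$ vanish identically on $U$, leaving only terms with at least $m-l+1$ signed corrector factors. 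The refinement of Gromov's graph volume estimate consists in showing that each surviving term, once integrated against the closed positive current $\omega^{k-m}\wedge\bigwedge_{i\notin S}T_{j_i}$, is bounded by $C\prod_{i\in S}\|\phi^{q_i}_{j_i}\|_{L^\infty}\leq C\prod_{i\in S}d^{-q_i}$. Summing over $(q_i,j_i)\in[0,n-1]\times[1,2d-2]$ then collapses the contribution at each $m\geq l$ to at most $C_m\,n^{m-l+1}\,d^{n(l-1)}$, completing the volume bound.

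The main obstacle is obtaining this \emph{product} bound. Since each $\phi^{q_i}_{j_i}$ is only bounded and signed (a difference of plurisubharmonic functions), a single Chern--Levine--Nirenberg integration by parts yields one factor $\|\phi\|_{L^\infty}$, but a naive iteration delivers merely the sum bound $C\sum_{i\in S}d^{-q_i}$ — enough for $m=l$ but not for $m>l$. The required refinement has to carry out the $|S|$ integrations by parts in a coordinated way, exploiting at each step the closedness of the remaining wedge $\bigwedge_{i\in S'}dd^c\phi^{q_i}_{j_i}\wedge\bigwedge_{i\notin S}T_{j_i}$ (with $S'\subset S$) so that every cutoff derivative falls on a smooth form and every corrector releases its $L^\infty$-norm. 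This quantitative stability of Bedford--Taylor wedge products under $L^\infty$-close psh potentials is the technical heart of the proof.
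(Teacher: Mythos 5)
Your reduction of the theorem to a volume bound for the critical-orbit graph $\Gamma_n$ via Lelong's inequality is exactly the route the paper takes, and you correctly locate where the hypothesis $K\cap\supp(T_\bif^l)=\varnothing$ must enter. The gap is the estimate you yourself call the technical heart: the multiplicative bound
\[
\Bigl|\int\theta\,\bigwedge_{i\in S}dd^c\phi^{q_i}_{j_i}\wedge\bigwedge_{i\notin S}T_{j_i}\wedge\omega^{k-m}\Bigr|\;\le\;C\prod_{i\in S}\|\phi^{q_i}_{j_i}\|_\infty .
\]
This is asserted, not proved, and the sketched ``coordinated'' integrations by parts cannot deliver it: after one application of Stokes the integrand is $\phi^{q_1}_{j_1}\,dd^c\theta\wedge(\cdots)$, and a second application would require differentiating $\phi^{q_1}_{j_1}\,dd^c\theta$, which you do not control --- the correctors are merely continuous, and $dd^c\phi^{q}_{j}$ is a difference of positive closed currents whose masses are bounded but not small. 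Standard Chern--Levine--Nirenberg/Bedford--Taylor stability in this mixed-sign situation extracts exactly \emph{one} sup-norm factor (times the bounded masses of the remaining, signed, wedge), and a counting check shows that one factor suffices only for $m\le l$; for $l=1$ one can instead use that near $K$ all $dd^c\phi^{q}_{j}$ are positive (local psh-ness, equivalently normality of the critical orbits), so genuine CLN applies. Hence precisely in the range $2\le l<\dim\Lambda$ and $m>l$ your argument rests on a quantitative multiplicative stability of Bedford--Taylor products under $L^\infty$-perturbations of the potentials which is much stronger than what the theory provides, and no proof of it is offered.

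The paper avoids this issue by a different decomposition. Instead of normalizing each factor and writing it as (individual current $T_{c_j}$) plus an exponentially small corrector, it writes $(\mathfrak{a}_{\ell})^*(\omega)=d^{\ell}\,T_{\mathfrak{a}}+dd^c u_{\ell}$ with the \emph{total} bifurcation current $T_{\mathfrak{a}}$ and \emph{unnormalized} potentials $u_\ell$ that are only locally uniformly bounded. Processing one factor at a time with nested cutoffs $\theta_1,\dots,\theta_l$, the part $d^{\ell}T_{\mathfrak{a}}$ is bounded by $d^{n}T_{\mathfrak{a}}$ and adds one power of $T_{\mathfrak{a}}$, while the part $dd^cu_\ell$ is moved by Stokes onto $dd^c\theta_j$ and estimated, using the positivity and closedness of the untouched factors, by $C\,\omega_\Lambda\wedge(\text{positive closed rest})$, i.e.\ an $O(1)$ step in which one factor is simply replaced by $\omega_\Lambda$. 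Every resulting term has the form $d^{jn}\int\theta\,\omega_\Lambda^{\dim\Lambda-j}\wedge T_{\mathfrak{a}}^{j}$, the hypothesis kills all terms with $j\ge l$ near $K$, and the survivors are $O(d^{(l-1)n})$, giving the theorem after taking $\tfrac1n\log$. No smallness of the potentials, and in particular no product bound, is ever needed; to complete your write-up you would either have to prove the multiplicative stability estimate (not available by the argument you indicate) or regroup the terms as the paper does.
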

We now want to define a measure-theoretic entropy of bifurcation. The classical definition uses partitions and at some point relies on the invariance of the measure so having a variational principle seems very difficult with that respect. We thus proceed as in \cite{katok1980} and give a definition of (bifurcation) measure-theoretic entropy based on the concept of $(d_{n},\varepsilon)$-separated set.  

Pick a positive Radon measure $\nu$ on $\Lambda$ (for example a probability measure).
\begin{definition}
	 Let $K\subset \Lambda$ be a compact set with $\nu(K)>0$. For any Borel set $X\subset K$ with $\nu(X)<\nu(K)$, let
	 \[h_{\nu,\bif}(\hat{f},K,X):=\lim_{\varepsilon\to0}\limsup_{n\to\infty}\frac{1}{n}\max\left\{\log\mathrm{Card}(E)\, ; \ E\subset X \ \text{is} \ (d_{n},\varepsilon)\text{-separated}\right\}.\]
	 For $0<\kappa < \nu(K)$, we then let:
	 \[h_{\nu,\bif}(\hat{f},K,\kappa):=\inf \{h_{\nu,\bif}(\hat{f},K,X),\  \nu(X)>\nu(K)-\kappa \}.\]
	 We defined the \emph{metric bifurcation entropy} of $\nu $ in $K$, denoted by  $h_{\nu,\bif}(\hat{f},K)$, as 
	 \[h_{\nu,\bif}(\hat{f},K):=\sup_{\kappa\to 0} h_{\nu,\bif}(\hat{f},K,\kappa).\]	 
	 We define the \emph{metric bifurcation entropy} of $\nu $ for the family $\hat{f}$ as
	 \[h_{\nu,\bif}(\hat{f}):= \sup_K h_{\nu,\bif} (\hat{f},K).\]	 
\end{definition}
Observe that $h_{\nu,\bif}(\hat{f}, K\cup K')=\max ( h_{\nu,\bif}(\hat{f}, K),  h_{\nu,\bif}(\hat{f}, K'))$  and that $h_{\nu,\bif}(\hat{f})\leq h_{\bif}(\hat{f})$ for all $\nu$, though there is no natural notion of ergodicity for $\nu$.

Denote by $\mu_{\bif}$ the bifurcation measure of the family $\hat{f}$ (see Section \ref{Section:bifurcationentropy} for a precise definition). We prove the following 
\begin{Theorem}\label{Minoration}For any compact set $K$ such that $\mu_{\bif}(K)>0$ then
	\[ h_{\mu_{\bif}, \bif}(\hat{f},K)= \dim(\Lambda) \log d. \]
	In particular, if $\mu_\bif\neq 0$, one has
	\[h_{\mu_\bif,\bif}(\hat{f})=h_{\bif}(\hat{f})=  \dim(\Lambda) \log d.\]
\end{Theorem}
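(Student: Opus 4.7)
The plan is to establish each inequality in $h_{\mu_{\bif}, \bif}(\hat{f},K) = \dim(\Lambda) \log d$ separately, and then to derive the ``in particular'' clause from the inequalities $h_{\mu_\bif,\bif}(\hat{f}) = \sup_K h_{\mu_\bif,\bif}(\hat{f},K) \leq h_\bif(\hat{f})$ combined with an exhaustion of $\Lambda$ by compacts of positive $\mu_\bif$-mass.

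For the upper bound $h_{\mu_\bif, \bif}(\hat{f}, K) \leq \dim(\Lambda) \log d$, note that $h_{\mu_\bif, \bif}(\hat{f}, K) \leq h_\bif(\hat{f}, K)$ by definition, so it suffices to bound the topological bifurcation entropy. I would apply Theorem~\ref{Majoration} with $l = \dim(\Lambda)+1$: although stated for $1 \leq l \leq \dim(\Lambda)$, the conclusion holds trivially at this boundary value, since $T_\bif^{\dim(\Lambda)+1} \equiv 0$ for dimension reasons and so the support-avoiding hypothesis is vacuous. The volume-growth-of-graphs proof of Theorem~\ref{Majoration} extends without change.

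The core of the argument is the lower bound. Set $k := \dim(\Lambda)$ and introduce the critical evaluation map
\[ \Phi_n \colon \Lambda \longrightarrow (\P^1)^{n(2d-2)}, \qquad \lambda \longmapsto \bigl(f_\lambda^q(c_j(\lambda))\bigr)_{\substack{1 \leq j \leq 2d-2 \\ 0 \leq q \leq n-1}}, \]
so that $d_n$ pulls back the sup-Fubini-Study distance on the target and each dynamical ball $B_{d_n}(\lambda, \varepsilon)$ equals $\Phi_n^{-1}(B_\varepsilon(\Phi_n(\lambda)))$. The crux of the proof is the estimate
\[ \mu_\bif\bigl(B_{d_n}(\lambda, \varepsilon)\bigr) \; \leq \; C\, \varepsilon^{2k}\, d^{-nk}, \]
uniform in $\lambda$, for $n$ large and $\varepsilon > 0$ small. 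Once this is proved, the conclusion is immediate: for any Borel $X \subset K$ with $\mu_\bif(X) > \mu_\bif(K) - \kappa$ and any maximal $(d_n, \varepsilon)$-separated set $\{\lambda_i\}_{i=1}^N \subset X$, the balls $B_{d_n}(\lambda_i, \varepsilon)$ cover $X$, yielding
\[ \mu_\bif(X) \leq N \cdot C\, \varepsilon^{2k}\, d^{-nk}, \]
hence $(1/n) \log N \geq k \log d + O_\varepsilon(1/n)$. Letting $n \to \infty$, then $\varepsilon \to 0$, then taking infimum over $X$, we obtain $h_{\mu_\bif, \bif}(\hat{f}, K) \geq k \log d$.

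The hard part will be to establish the ball estimate. The heuristic is clear: writing $T_\bif$ as a sum over critical markings of individual bifurcation currents $T_j = \lim_q d^{-q}(f^q \circ c_j)^* \omega$, one has $\mu_\bif = T_\bif^k$ expressed as a sum of wedge products $T_{j_1} \wedge \cdots \wedge T_{j_k}$. Each such wedge product is, up to the factor $d^{-nk}$, approximated by the pullback under $\Phi_n$ of a smooth Kähler volume form on a suitable $k$-fold subproduct of factors of $(\P^1)^{n(2d-2)}$; for this pullback, the preimage of an $\varepsilon$-ball in the target has mass at most $C \varepsilon^{2k} d^{-nk}$ by a direct change-of-variables. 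Upgrading this heuristic to an honest inequality on $\mu_\bif$ itself, with errors controlled uniformly in $\lambda$ and in $n$—the principal difficulty being that $\Phi_n$ may develop severely singular fibers along the bifurcation locus, where critical orbits collide—is exactly the content of the generalization of Yomdin's bound on the volume of the image of a dynamical ball announced in the abstract.
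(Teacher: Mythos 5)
Your overall architecture matches the paper's: the upper bound is obtained from the volume-of-graph estimate, and your observation that Theorem~\ref{Majoration} (equivalently Theorem~\ref{alaGromov}) applies at the boundary exponent $l=\dim(\Lambda)+1$, where the support hypothesis is vacuous because $T_\bif^{\dim(\Lambda)+1}=0$ for dimension reasons, is exactly how the paper closes the gap between the lower bound and $\dim(\Lambda)\log d$; the lower bound is reduced, as in the paper, to a uniform upper estimate for $\mu_\bif\left(B_{d_n}(\lambda,\varepsilon)\right)$ followed by the covering/separated-set argument (Proposition~\ref{classical_volume_argument}), and the ``in particular'' clause follows as you say.

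The problem is that the ball-mass estimate, which is the entire content of Theorem~\ref{Minoration} (i.e.\ of Theorem~\ref{minoration}), is not proved, and the form in which you state it is not obtainable by the route you indicate. Writing $\mu_\bif$ as a sum of terms $T_{c_{j_1}}\wedge\cdots\wedge T_{c_{j_k}}$ and replacing each factor by $d^{-(n-1)}$ times the pullback of $\omega_{\P^1}$ under the time-$(n-1)$ critical orbit map, the quantity to bound is $d^{-nk}$ times the volume, counted with multiplicity, of the image of the parametric ball under the evaluation map; a ``direct change-of-variables'' gives no bound here, because there is no upper Lelong-type estimate for the mass of an image variety inside an $\varepsilon$-ball: a priori this volume can grow like $e^{Cn}$, and the best one can prove is $e^{\gamma n}$ for arbitrarily small $\gamma$ --- that is Yomdin's theorem, so your claimed bound $C\varepsilon^{2k}d^{-nk}$ (with no subexponential loss) is unjustified, though the weaker bound $e^{-n(k\log d-\gamma)}$ would indeed suffice for your covering argument. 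Moreover, replacing the limiting currents $T_{c_j}$ by finite-time pullbacks is itself not harmless: it is done via the uniformly bounded potentials of Lemma~\ref{lm:DF} and repeated integration by parts against cut-off functions adapted to the parametric dynamical ball (Lemma~\ref{dynamical_cutoff}), and the Stokes error terms involve wedge products of pullbacks at \emph{mixed intermediate times} $i_1,\dots,i_k\leq n-1$ evaluated on the graph of the marked points; bounding these is precisely the new Yomdin-type statement Proposition~\ref{alaYomdin} (proved via the product trick $M^k\cap\Delta$ in $X^k$), not the classical one. Your proposal defers all of this to the estimate ``announced in the abstract'' and moreover misidentifies the principal difficulty (singular fibers of $\Phi_n$ are not the issue; controlling the volume growth of images of dynamical balls and the mixed-time error terms is), so the central step remains a genuine gap.
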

Notice that the hypothesis $\mu_\bif\neq 0$ is satisfied if and only if there exists a parameter in $\Lambda$ which admits $k$ critical points that are, in a non persistent way, strictly preperiodic to a repelling cycle (\cite{buffepstein,Article1,Dujardin2012}). It is in particular satisfied in any smooth orbifold parametrization of the moduli space of rational maps of degree $d$ with marked critical points. \\

In particular, the theorem asserts that $\mu_\bif$ has maximal entropy in a very strong sense: it only sees sets of maximal entropy and by Theorem~\ref{Majoration}, any compact set outside its support does not carry maximal entropy. This gives a very precise interpretation of the bifurcation measure. A natural question is to know whether any measure satisfying those properties is equivalent to $\mu_\bif$. 

From the two above theorems, we deduce that $\mu_\bif$ satisfies a parametric Brin-Katok formula (see Theorem~\ref{brinkatok}). We show similarly that  the trace measure of $T_\bif^l$ is a measure of maximal entropy in $\mathrm{supp}(T_\bif^l) \backslash \mathrm{supp}(T_\bif^{l+1})$ (see Theorem~\ref{entropy_current_bif}). \\

To prove Theorem~\ref{Minoration}, we use Yomdin's bound of the volume of the image of a dynamical ball \cite{yomdin}. The use of such ideas to compute the entropy of measures in complex dynamics in several variables has been introduced in \cite{bedfordsmillie3}; the first and third authors generalized this idea to give a very general criterion under which we can produce a measure of maximal entropy for a meromorphic map of a compact Kähler manifold (\cite{ThelinVigny1}, a great difficulty arises from the need to control precisely the derivatives near the indeterminacy set). Nevertheless, in both articles, one does not work with the measure directly (but with a Cesàro mean of approximations) and one uses the Misiurewicz's proof of the variational principle to conclude.

In here, the idea is to apply Yomdin's estimate on the parametric balls (with respect to $d_{n}$) directly for $\mu_{\bif}$. We need a precise control on the convergence towards the bifurcation current (we did not have nor needed in \cite{ThelinVigny1} in the general case of meromorphic maps). Our proof leads us to deal with terms of the form
\[ \int_{B_{d_n}\left(x,\varepsilon\right) \cap M} \bigwedge_{j=1}^k (F^{i_j})^*( \Omega) \]
where all $i_j$ are $\leq n-1$, $F$ is a holomorphic map on some manifold $X$ and $M$ is a complex submanifold of $X$ of codimension $k$ endowed with a metric $\Omega$. If all the $i_j$ were either $0$ or $n-1$, this would be the classical Yomdin's bound.  The idea of the proof is to work in the product space $X^k$ and to replace the manifold $M$ with $M^k \cap \Delta$ where $\Delta$ is the diagonal of $X^k$ which still has bounded geometry (Proposition~\ref{alaYomdin}).\\

Going further in the analogy between the dynamics of an endomorphism of $\P^k$ and 
bifurcation in a holomorphic family of rational maps, it is natural to try and define \emph{parametric Lyapunov exponents} by $\chi_j(\lambda)=\lim_n n^{-1} \log |(f_\lambda^n)'(c_j(\lambda))|$ and show that $\chi_j(\lambda)=L(f_\lambda)$ for $\mu_{\bif}$-almost every parameter $\lambda$ (at least in the case of the moduli space of rational maps), where $L(f_\lambda)$ is the Lyapunov exponent of $f_\lambda$ with respect to its unique measure of maximal entropy $\log d$. This has been done successfully in \cite{graczykswiatek} in the very particular case of the unicritic family ($f_\lambda(z)=z^d+\lambda$). The proof relies on subtle properties of external rays and Makarov theorem. Generalizing such result is a challenging question that goes beyond the scope of this article. \\

In a second part of the article, we use the previous technics (especially our variation of Yomdin's estimates) in the case of ergodic theory in several complex variables.

 First, we give an alternate proof of the computation of  the entropy of the Green measure $\mu$ of a Hénon maps (\cite{bedfordsmillie3}). We apply for that our  estimate on the dynamical ball $B_n(x,\varepsilon)$ directly for the measure $\mu$. This allows us to get rid of Misiurewicz' proof of the variational principle (we explain as an application how we can retrieve Brin-Katok formula for Hénon maps). 

Finally, we define a notion of entropy for the trace measure  of the Green currents $T_f^l$ of a holomorphic endomorphism $f:\P^k\to \P^k$ of degree $d$. We use a definition similar to the one we used for bifurcation currents, counting the growth rate of the number of $\varepsilon$-orbit, up to a set of positive  but non total measure. We show that it is always $\geq l \log d$ and that it is equal to $ l \log d$ on compact sets  of $\supp(T^l_f) \backslash \supp (T^{l+1}_f)\neq \varnothing$ having positive trace measure;  nevertheless, we give examples where it is equal to $\alpha$ for any $\alpha\in [l\log d, k \log d]$. This makes the study of the entropy of the trace measure of Green currents richer than the entropy of Green measures (since the latter is always $k \log d$). Finally, note that the idea to do ergodic theory for the trace measure of the Green currents was already exploited in \cite{dujardindirection} where Dujardin defined, for those measures, a notion of Fatou directions, similar to the notion of Lyapunov exponents.  \\

In Section~\ref{preliminaries}, we shall give a general cut-off lemma for dynamical balls and prove our generalization of Yomdin's bound. Then, in Section~\ref{bifurcation}, we recall the construction of the bifurcation currents and properties we need. We then prove Theorems~\ref{Majoration} and \ref{Minoration} in the setting of families of holomorphic endomorphism of $P^q$ with marked points and explain how to get back to the above setting. Finally, Section~\ref{higher} is devoted to our results in complex dynamics in several variables.

\section{Preliminaries}\label{preliminaries}

\subsection{A dynamical cut-off lemma}
Let $X$ be a Kähler manifold endowed with a Kähler form $\Omega$ and let $f:X\to X$ be a holomorphic map.  Let $d$ be the distance associated to $\Omega$. For $n\geq 0$, we have on $X$ the Bowen distance:
\[d_n(x,y):= \max_{i\in \{0, \dots,n-1\}} d(f^i(x),f^i(y)). \]
We denote by $B_{d_n}(x,\varepsilon)$ the ball centered at $x$ and radius $\varepsilon$ for $d_n$. 

Let $Y \Subset X$ be a relatively compact set such that $f(Y)\subset Y$ (if $X$ is compact, one can simply take $Y=X$). Let $\varepsilon_0>0$ be such that $Y_{2\varepsilon_0}$, the $2\varepsilon_0$-neighborhood of $Y$, is still relatively compact in $X$ and $f(Y_{2\varepsilon_0}) \subset Y_{2\varepsilon_0}$.  
\begin{lemma}\label{dynamical_cutoff} We take the above notations. There exists a constant $C$ such that for all $x\in Y$, all $0<\varepsilon<\varepsilon_0$ and all $n \in \N$, there exists a smooth function $\theta_n$ satisfying:
	\begin{itemize}
		\item $\theta_n\equiv 1$ in $B_{d_n}(x,\varepsilon)$ and $\mathrm{supp}(\theta_n)\subset B_{d_n}(x,2\varepsilon)$. 
		\item $C\frac{n^2}{\varepsilon^2} \sum_{i=0}^{n-1} (f^i)^*(\Omega) \pm dd^c \theta_n \geq 0. $
	\end{itemize}
	\end{lemma}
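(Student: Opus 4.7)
The plan is to build $\theta_n$ as a product of individual cut-offs, one per iterate. Fix once and for all a smooth non-increasing function $\chi:[0,\infty)\to [0,1]$ with $\chi\equiv 1$ on $[0,1]$, $\chi\equiv 0$ on $[4,+\infty)$ and $\|\chi'\|_\infty + \|\chi''\|_\infty \leq C_0$. For $0\leq i\leq n-1$ let
\[\phi_i(y) := \chi\left(\frac{d(f^i(x), f^i(y))^2}{\varepsilon^2}\right),\]
and set $\theta_n := \prod_{i=0}^{n-1}\phi_i$. By construction $\phi_i\equiv 1$ when $d(f^i(x),f^i(y))\leq \varepsilon$ and $\phi_i\equiv 0$ when $d(f^i(x),f^i(y))\geq 2\varepsilon$, so $\theta_n$ has the required support. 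The hypothesis $f(Y_{2\varepsilon_0})\subset Y_{2\varepsilon_0}$ guarantees that every iterate $f^i(y)$ involved stays in the relatively compact set $\overline{Y_{2\varepsilon_0}}$, where the geometry is uniform.

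The key pointwise input is a uniform estimate on the distance-squared function: a finite-cover argument on $\overline{Y_{2\varepsilon_0}}$ yields a constant $C_1>0$, depending only on $(Y,\Omega)$, such that on a neighbourhood of the diagonal one has
\[\pm dd^c_q\bigl(d(p,q)^2\bigr) \leq C_1 \Omega_q \quad \text{and} \quad d_q\bigl(d(p,q)^2\bigr) \wedge d^c_q\bigl(d(p,q)^2\bigr) \leq C_1\, d(p,q)^2\, \Omega_q.\]
Pulling back by $f^i$ and applying the chain rule with the uniform bounds on $\chi'$, $\chi''$ (noting that on $\mathrm{supp}(d\phi_i)$ one has $d(f^i(x),f^i(y))\leq 2\varepsilon$), one obtains, globally on $X$,
\[\pm dd^c \phi_i \leq \frac{C_2}{\varepsilon^2}(f^i)^*\Omega \quad \text{and} \quad d\phi_i\wedge d^c\phi_i \leq \frac{C_2}{\varepsilon^2}(f^i)^*\Omega.\]

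To assemble, expand $dd^c\theta_n$ via the iterated product rule:
\[dd^c\theta_n = \sum_i\Bigl(\prod_{j\neq i}\phi_j\Bigr)dd^c\phi_i + \sum_{i\neq j}\Bigl(\prod_{k\neq i,j}\phi_k\Bigr)d\phi_i\wedge d^c\phi_j.\]
The diagonal terms are controlled directly using $0\leq\phi_j\leq 1$ together with the first estimate above. For the cross terms, use $d(\phi_i\pm\phi_j)\wedge d^c(\phi_i\pm\phi_j)\geq 0$, which yields $\pm(d\phi_i\wedge d^c\phi_j + d\phi_j\wedge d^c\phi_i) \leq d\phi_i\wedge d^c\phi_i + d\phi_j\wedge d^c\phi_j$; summing over pairs produces at worst a factor of $n$ in front of $\sum_i d\phi_i\wedge d^c\phi_i$. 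In total one gets $\pm dd^c\theta_n \leq (Cn/\varepsilon^2)\sum_i (f^i)^*\Omega$, which in fact improves the stated $n^2/\varepsilon^2$ bound.

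The only real obstacle is the uniform distance-squared estimate of the second paragraph: it uses that $d(p,q)^2$ is smooth near the diagonal, reduces to $|y-p|^2$ to second order in suitable holomorphic coordinate charts, and that $\overline{Y_{2\varepsilon_0}}$ is compact, so that the constants can be chosen independently of $x\in Y$, $\varepsilon<\varepsilon_0$ and $n$. Everything else is a mechanical computation built on the product rule and the elementary Cauchy--Schwarz-type inequality above.
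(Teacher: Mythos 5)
Your proposal is correct and follows essentially the same route as the paper: a product of per-iterate cut-offs pulled back by $f^i$, the product rule for $dd^c$, the bounds $\pm dd^c\phi_i\leq C\varepsilon^{-2}(f^i)^*\Omega$ and $d\phi_i\wedge d^c\phi_i\leq C\varepsilon^{-2}(f^i)^*\Omega$, and the same Cauchy--Schwarz-type inequality for the cross terms (your count indeed gives a factor $n$, which the paper's computation also yields before being stated more crudely as $n^2$). The only caveat is that your explicit cut-off $\chi(d(f^i(x),\cdot)^2/\varepsilon^2)$ requires $2\varepsilon$ to stay below the injectivity radius on $\overline{Y_{2\varepsilon_0}}$ for $d^2$ to be smooth, a minor point handled exactly like the paper's unproved assertion that uniform cut-offs exist via a finite cover.
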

\begin{proof} Using a finite cover, one can construct for every $x\in Y$ a smooth cut-off function $\theta_x$ such that $\theta_x=1$ in $B(x, \varepsilon)$ and $\mathrm{supp}(\theta_x)\subset B(x,2\varepsilon)$ (for the distance $d$). Let $C>0$ be such that for all $x \in Y$, $\varepsilon< \varepsilon_0$:
	\[ \frac{C}{\varepsilon^2} \cdot \Omega \pm dd^c \theta_x \geq 0 \ \mathrm{and} \ d\theta_x \wedge d^c \theta_x \leq \frac{C}{\varepsilon^2} \cdot \Omega .\]
Fix $x\in Y$. We then define $\theta_n:=\Pi_{i=0}^{n-1} \theta_{f^i(x)}\circ f^i $.  By construction, $\theta_n\equiv 1$ in $B_{d_n}(x,\varepsilon)$ and $\mathrm{supp}(\theta_n)\subset B_{d_n}(x,2\varepsilon)$. We compute:
\begin{align*}
dd^c \theta_n =&  \sum_{i=0}^{n-1} \left(\Pi_{j\neq i} \theta_{f^j(x)}\circ f^j\right) dd^c \theta_{f^i(x)}\circ f^i \\
 & + \sum_{\ell \neq \ell'} \left(\Pi_{j\neq \ell, \ j\neq \ell'} \theta_{f^j(x)}\circ f^j\right) d\theta_{f^\ell(x)}\circ f^\ell \wedge  d^c\theta_{f^{\ell'}(x)}\circ f^{\ell'} . 
\end{align*} 
Using that $\pm ( d \psi \wedge d^c \varphi + d \varphi \wedge d^c \psi )\leq d \psi \wedge d^c \psi + d \varphi \wedge d^c \varphi$ and the properties of $\theta_x$ gives
\begin{align*}
0 &\leq \pm dd^c \theta_n + \frac{C}{\varepsilon^2} \sum_{i=0}^{n-1} (f^i)^*(\Omega) + \frac{2C}{\varepsilon^2} \sum_{\ell\neq \ell'}^{n-1} (f^\ell)^*(\Omega)+ (f^{\ell'})^*(\Omega). 
\end{align*} 
The result follows, up to changing the constant $C$. 
\end{proof}

\subsection{A Yomdin's Lemma }
We keep the notations of the above subsection ($f(Y)\subset Y \subset Y_{2\varepsilon_0}\Subset X$). We will need the following variation of Yomdin's bound on the growth of the size of the image of a dynamical ball which uses the Algebraic Lemma (first stated in \cite{yomdin}, see \cite{burguet} for a complete proof). In what follows, we say that a family of smooth manifolds has uniformly bounded geometry if for each $r$, each manifold can be covered by a uniform number of pieces of $C^r$-size equal to $1$. 
\begin{proposition}\label{alaYomdin}  For all $\gamma>0$, there exists $\varepsilon>0$ such that for any family of smooth manifolds $M$ with uniformly bounded geometry and dimension $k$ , there exists an integer $n_0$ such that for any $n\geq n_0$, any  $0\leq i_1  \leq i_2\leq \dots\leq i_k \leq n-1$ and any $x\in Y$, then: 
\[ \int_{B_{d_n}\left(x,\varepsilon\right) \cap M} \bigwedge_{j=1}^k (f^{i_j})^*( \Omega) \leq  e^{\gamma n}\]	
\end{proposition}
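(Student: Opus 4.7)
The plan is to reduce Proposition~\ref{alaYomdin} to the classical Yomdin--Gromov bound by passing to the product space $X^k$, equipped with the product map $F:=f\times\cdots\times f$, and by embedding $M$ diagonally, exactly as suggested in the introduction.

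\textbf{Step 1 (reformulation on $X^k$).} Let $\iota:X\hookrightarrow X^k$, $\iota(y):=(y,\dots,y)$, be the diagonal embedding, and endow $X^k$ with the K\"ahler form $\tilde\Omega:=\sum_{j=1}^k\pi_j^*\Omega$. From $\pi_j\circ F^m=f^m\circ\pi_j$ and $\pi_j\circ\iota=\id_X$ one obtains
\[\iota^*\bigl(\pi_1^*(f^{i_1})^*\Omega\wedge\cdots\wedge\pi_k^*(f^{i_k})^*\Omega\bigr)=\bigwedge_{j=1}^k(f^{i_j})^*\Omega,\]
and $d_n^F(\iota y,\iota z)=d_n(y,z)$, so that $\iota\bigl(B_{d_n}(x,\varepsilon)\cap M\bigr)=B_{d_n^F}(\iota x,\varepsilon)\cap\iota(M)$. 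The integral to be controlled therefore rewrites as
\[\int_{B_{d_n^F}(\iota x,\varepsilon)\cap\iota(M)}\bigwedge_{j=1}^k(F^{i_j})^*\pi_j^*\Omega.\]
The key feature is that $\iota(M)=M^k\cap\Delta$, with $\Delta$ the diagonal of $X^k$, is a complex $k$-dimensional submanifold of $X^k$ whose uniformly bounded geometry is inherited from that of $M$.

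\textbf{Step 2 (simultaneous Yomdin--Gromov subdivision).} I will apply the iterated algebraic lemma of Yomdin (see~\cite{yomdin,burguet,gromov_enseignement}) to $F$ on the bounded-geometry submanifold $\iota(M)$ inside $B_{d_n^F}(\iota x,\varepsilon)$. Choosing the regularity $r$ of the lemma large enough as a function of $\gamma,k,\Omega$ and $f$, the lemma yields, for $\varepsilon\le\varepsilon(\gamma)$ and $n\ge n_0$, a partition of $B_{d_n^F}(\iota x,\varepsilon)\cap\iota(M)$ into at most $e^{\gamma n/2}$ good pieces $P_\alpha$, each admitting a smooth parametrization $\phi_\alpha:\D^k\to X^k$ such that \emph{every} iterate $F^m\circ\phi_\alpha$, $m\le n-1$, has $C^r$-norm $\le 1$. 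Uniformity of $\varepsilon$ over the bounded-geometry family $M$ and over the multi-index $\underline i$ is exactly what the diagonal embedding secures.

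\textbf{Step 3 (per-piece estimate and conclusion).} On a good piece the forms $(F^m)^*\tilde\Omega$ pulled back to $\D^k$ have $C^0$-norm controlled by a constant depending only on $k$ and $\Omega$. Since $(F^{i_j})^*\pi_j^*\Omega\le(F^{i_j})^*\tilde\Omega$, one obtains
\[\int_{P_\alpha}\bigwedge_{j=1}^k(F^{i_j})^*\pi_j^*\Omega\le C_k\]
for a constant $C_k=C_k(k,\Omega)$ independent of $\alpha$, $n$ and $\underline i$. Summing over the at most $e^{\gamma n/2}$ pieces yields the bound $\le e^{\gamma n}$ for $n\ge n_0$.

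\textbf{Main obstacle.} The entire difficulty sits in Step~2: one needs a quantitative subdivision that simultaneously tames all $n$ iterates $F^0,\dots,F^{n-1}$ on $\iota(M)$ with a sub-exponential number $e^{\gamma n}$ of pieces, a threshold $\varepsilon$ depending only on $\gamma$, and constants uniform over the bounded-geometry family. The Yomdin--Gromov algebraic lemma in the form of~\cite{burguet} provides exactly this, and the r\^ole of the diagonal embedding is to package the mixed-iterate integrand as an integral over the uniformly bounded-geometry submanifold $\iota(M)=M^k\cap\Delta\subset X^k$, so that a single application of the lemma to the product map $F$ replaces a delicate juggling of separate estimates for each $f^{i_j}$.
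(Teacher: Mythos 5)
Your outline is viable, but it is not the paper's argument, and its crux is asserted rather than proved. The paper uses only the \emph{volume-growth} form of Yomdin's bound (volume of the image of a Bowen ball under the top iterate) and reduces the mixed-exponent wedge to it in three moves: (a) replace each $i_j$ by the nearest multiple $l_jm$ of a block length $m=E(\iota n)$, at the cost of a factor $\|Df\|^{2km}\leq e^{\gamma n}$; (b) dispose of the exponents with $l_j=0$ by dominating $\Omega$ by currents of integration along laminations by linear subspaces, which only replaces $M$ by slices $M\cap L$ of uniformly bounded geometry; (c) for the remaining exponents, pass to $X^k$ with the \emph{twisted} map $f_{\underline{l}}=(f^{l_1},\dots,f^{l_k})$ restricted to $M^k\cap\Delta$, so that the mixed wedge becomes the pullback of a power of the product K\"ahler form under the single map $f_{\underline{l}}^m$, i.e.\ a classical volume-growth estimate over $m$ steps (the finiteness of the set of possible $\underline{l}$, for $\iota$ fixed, is what allows one rescaling parameter $j$ to work for all of them). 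Your route instead invokes the \emph{reparametrization} form of Yomdin--Gromov, taming all iterates $F^m\circ\phi_\alpha$, $m\leq n-1$, simultaneously; granted that, the mixed wedge is indeed bounded piece by piece for any $i_1,\dots,i_k$, and your diagonal embedding with the untwisted product $F=(f,\dots,f)$ is then redundant: the same computation can be made directly on $M\subset X$. The twisting by different powers is the real content of the diagonal trick in the paper; in your version the product space buys nothing.

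The gap is Step~2. Burguet's article proves the Algebraic Lemma itself (reparametrization of semialgebraic sets by maps of unit $C^r$-norm); it does not deliver the dynamical statement you quote, namely a subdivision of $B_{d_n}(x,\varepsilon)\cap M$ into at most $e^{\gamma n/2}$ pieces taming all $n$ iterates, with $\varepsilon$ depending only on $\gamma$ and constants uniform over the bounded-geometry family. Deriving such a statement is exactly Gromov's iteration argument recalled at length in the paper: one application of the Algebraic Lemma per time step multiplies the number of pieces by $C\|D_rf\|^{2k/r}+1\geq 2$, which is never $\leq e^{\gamma/2}$ for small $\gamma$; one must block time (replace $f$ by $f^m$ with $m\sim\iota n$) and rescale the cubes ($\varepsilon=1/j$) so that $\|D_rf_j^m\|\leq 2\|Df^m\|$, and then take $r$ large. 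Moreover, after blocking, the parametrizations control only the iterates at times which are multiples of $m$; intermediate times $i_j=l_jm+r_j$ are controlled only up to a factor $\|Df\|^{2km}\leq e^{\gamma n}$ --- harmless, but it must be accounted for, and it is precisely the paper's first reduction. Your proof becomes complete once Step~2 is replaced either by this blocking-and-rescaling argument (at which point it essentially merges with the paper's proof) or by a citation of a genuinely dynamical reparametrization theorem for $C^\infty$ maps controlling all intermediate iterates with subexponentially many pieces.
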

 \begin{proof}
 We first briefly recall the strategy of the proof of the bound:
 	 $$\mathrm{Vol}_\Omega(f^{n-1}(B_{d_n}\left(x,\varepsilon\right) \cap M ) ) \leq   e^{\gamma n}.$$
 	  We follow Gromov's exposition \cite{gromov_bourbaki}. Fix some regularity $r \gg 1$ and, up to reducing $M$ assume that its $C^r$-size is $1$ (this is where we use that the geometry is uniformly bounded so that for $n_0$ large enough they are $\leq e^{\gamma n_0}$ pieces). We work in some charts given by a finite atlas. Then, for any unit cubes $\Box_1, \dots, \Box_{i}$, let $M_i:= f^i \left(M \cap f^{-1}(\Box_1) \cap \dots \cap  f^{-i}(\Box_i)\right)$, then the Algebraic Lemma implies (see \cite{gromov_bourbaki}[(*) p 233]):
 \begin{equation}\label{etoile}
  \mathrm{Vol}_\Omega(M_i) \leq (C \|D_rf\|^{\frac{2k}{r}}  +1 )^i
 \end{equation}
 	where $C$ depends on $r$, the real dimensions of $M$ ($=2k$) and $X$ but not on $f$ nor $M$ and where $\|D_rf\|$ is the supremum of the derivatives of all order $\leq r$. 
 	
 	Take some $j \geq 1$ and some dynamical ball $B_{d_n}(x,1/j)$. We take $1 \ll m \ll n-1$. Let us assume to simplify that $n-1=mi$ for some $i\in \N$. Consider the $1/j$-cubes $\tilde{\Box}_l$ centered at $f^{ml}(x)$ for all $1\leq l\leq i$. Then 
 	\[f^{n-1}(B_{d_n}\left(x,1/j\right) \cap M) \subset (f^m)^{i}(M \cap  (f^m)^{-1}(\tilde{\Box}_1) \cap \dots \cap  (f^m)^{-i}(\tilde{\Box}_i)).\]  
Rescaling to $1$-cubes, we deduce from \eqref{etoile} that:
  	\[  \mathrm{Vol}_\Omega(f^{n-1}(B_{d_n}\left(x,1/j\right) \cap M)) \leq   (C \|D_rf_j^m\|^{\frac{2k}{r}}  +1 )^i,\]
  	where $f_j$ is the rescaled map in each unit cube $\Box_l:= j.\tilde{\Box}_l$
  	  	so $f_j(t):= jf(j^{-1} t)$ (working in some charts). We choose $j$ large enough so that $\|D_rf_j^m\| \leq 2 \|Df^m\|$ (rescaling reduces the norm of the derivatives of order $>1$ and has no effect on order $1 $). In particular:
  	  	\[   \mathrm{Vol}_\Omega(f^{n-1}(B_{d_n}\left(x,1/j\right) \cap M)) \leq   (C' \|Df^m\|^{\frac{2k}{r}}+1 )^i,\]
where $C'$ is another constant that depends only on $r$, the dimensions of $M$ and $X$ (we assume that $\|Df\| \geq 1$, if not then the result is already obvious). In particular, using $\|Df^m\| \leq \|Df\|^m$, $n-1=mi$, we recognize:
 	\[   \mathrm{Vol}_\Omega(f^{n-1}(B_{d_n}\left(x,1/j\right) \cap M)) \leq   \left((C')^\frac{1}{m}\right)^{n-1}.( \|Df\|^{\frac{2k}{r}})^n.\]
In fact, $r$ was chosen so that $\|Df\|^{\frac{2k}{r}}\leq e^{\gamma/2}$ and we now choose $m$ large enough so that $(C')^\frac{1}{m} \leq e^{\gamma/2}$ which proves the result (if $n-1\neq im$ we simply prove the bound for $n'=i m$ and we have a extra $\|Df\|^m$ that appears). \\

We now prove the Proposition. We take $\varepsilon\leq \varepsilon_0$, in what follows, the norms are taken on $Y_{2\varepsilon_0}$ (we are only interested in points whose orbit stays in $B_{d_n}\left(x,\varepsilon\right)\subset Y_{2\varepsilon_0}$). Observe that if all $i_j$ are equal to $n-1$, then the proposition means that:
\[ \int_{B_{d_n}\left(x,\varepsilon\right) \cap M} (f^{n-1})^*( \Omega^k)= \mathrm{Vol}_\Omega(f^{n-1}(B_{d_n}\left(x,\varepsilon\right) \cap M ) ) \leq   e^{\gamma n} \]
which is the classical bound. Similarly if $i_j=0$ for $j\leq j_0$  and $i_j=n-1$ for $j>j_0$, then we can bound, near $x$,  $\bigwedge_{j\leq j_0} \Omega$ by a finite sum of currents of integration on lamination by linear subspaces of codimension $j_0$:
\[ \bigwedge_{j\leq j_0} \Omega \leq C \sum_{\alpha} \int_{\alpha} [L_\alpha(u)] d\lambda_\alpha(u)     \]  
where $C$ is a constant that depends (locally uniformly) in $x$, the $\alpha$ are the subspaces of dimension $j_0$ given by the coordinates (we work in some chart), $L_\alpha(u)$ is the  subspace of codimension $j_0$ directed by the remaining coordinates that intersects $\alpha$ at $u$ and $\lambda_\alpha$ is the Lebesgue measure on $\alpha$. In particular, it is sufficient to bound the term
\[ \int_{B_{d_n}\left(x,\varepsilon\right) \cap M} (f^{n-1})^*( \Omega^{k-j_0}) \wedge [L(u_1)\cap \dots \cap L(u_{j_0})]  \]
uniformly in $u_1, \dots u_{j_0}$. Since it can be rewritten as:
\[ \mathrm{Vol}_\Omega\left(f^{n-1}(B_{d_n}\left(x,\varepsilon\right) \cap M \cap L(u_1)\cap \dots \cap L(u_{j_0})) \right)   \]
and $M \cap L(u_1)\cap \dots \cap L(u_{j_0}))$ has uniformly bounded geometry, the wanted inequality is again the classical Yomdin's result. \\

Let $\gamma  >0 $. Fix $\iota\ll 1$, independent of $n$,  and let $m= E(\iota n )$ ($E$ being the integer part). For all $i_j$, write $i_j= l_j m + r_j$ with $0\leq r_j < m$.  Then, as $f^*(\Omega) \leq \|Df\|^2 \Omega$ (up to changing the norm $\|Df\|$ by a constant), we have:
\[ \int_{B_{d_n}\left(x,\varepsilon\right) \cap M} \bigwedge_{j=1}^k (f^{i_j})^*( \Omega) \leq  \|Df\|^{2km}  \int_{B_{d_n}\left(x,\varepsilon\right) \cap M} \bigwedge_{j=1}^k (f^{l_jm})^*( \Omega). \]
In particular, $\|Df\|^{2km} \leq (\|Df\|^{2k\iota})^n \leq e^{\gamma n}$ by taking $\iota$ small enough which we do. 

 Assume that $l_j=0$ for $j\leq j_0$ Proceeding as above, we can replace $\Omega$ by  a finite sum of currents of integration on lamination by linear subspaces of codimension $j_0$. So we are reduced to the case of terms of the form $ \int_{B_{d_n}\left(x,\varepsilon\right) \cap M \cap L} \bigwedge_{j_0< j \leq k } (f^{i_j})^*( \Omega)$ where $L$ is a linear subspace, so  it is the same estimate with $M$ replaced by $M\cap L$. 
 
 Let us thus assume that $l_j \neq 0$ for all $j$. Write $\underline{l}:=(l_1,\dots, l_k)$. Let $f_{\underline{l}}:= (f^{l_1}, \dots, f^{l_k}): X^k \to X^k$  and $\Delta$ be the diagonal in $X^k$:
 \[\Delta:= \{(x,\dots, x) \in X^k\}. \] 
Let $\delta_k$ be the product distance on $X^k$: $\delta_k((x_i)_{i\leq k}, (y_i)_{i\leq k}):= \max_i d(x_i, y_i)$ and let $d_{k,p, f_\ul}$ be the Bowen distance in $X^k$ for the $p-1$-iterate associated to $f_\ul$. We let $B_{d_{k,p, f_\ul}}( (x_i)_{i\leq k}, \varepsilon)$ be the associated Bowen ball. Finally, let $\Omega_k:= \sum_j \pi_j^*\Omega$ where $\pi_j$ is the projection from $X^k$ to the $j$-th factor and $\tilde{M}:= M^k \cap \Delta$.  With these notations, we have:
\begin{align*}
 \int_{B_{d_n}\left(x,\varepsilon\right) \cap M} \bigwedge_{j=1}^k (f^{l_jm})^*( \Omega)  &\leq \int_{ \left( \pi_1^{-1}(B_{d_n} \left(x,\varepsilon\right))\right) \cap  \tilde{M} } \bigwedge_{j=1}^k \pi_j^*((f^{l_j m})^*( \Omega))\\
            &\leq   \int_{ B_{d_{k,m, f_\ul}}( (x, x, \dots ,x), \varepsilon) \cap \tilde{M} }  (f_\ul^m)^*(\Omega^k_k). 
\end{align*}
By the above proof of the bound
 	 $$\mathrm{Vol}_\Omega(f^{n-1}(B_{d_n}\left(x,\varepsilon\right) \cap M ) ) \leq   e^{\gamma n},$$
 	where we replace $M$ by $\tilde{M}$, $f$ by $f_\ul$ and $n$ by $m$, we infer that:
 \begin{align*}
 \int_{ B_{d_{k,m, f_\ul}}( (x, x, \dots ,x), 1/j) \cap \tilde{M} }   (f_\ul^m)^*(\Omega^k_k) &\leq  (C'' \| Df_\ul \|^{\frac{\tilde{2k}}{r}})^m\\
  &\leq (C'')^{\iota n} \| Df\|^\frac{n}{r} 
 \end{align*}	 
 where $C''$ is a constant that depends on $r$, the dimension $2k$ of $\tilde{M}$ and the dimension of $X^k$. Observe that it is crucial that, since $\iota$ is fixed, there is only finitely many $\ul$ (roughly $\leq \iota^{-k}$) so we can find a $j$ for which $\| D_r f_{\ul,j}  \| \leq 2 \|Df_\ul \|$ for all $\ul$ simultaneously. We conclude as above by taking $r$ large enough and adding the constraint $(C'')^{\iota} \leq e^\gamma$. 
 \end{proof}

 \section{Bifurcation entropy}\label{bifurcation}
 \subsection{Background in bifurcation theory}\label{background}
 \subsubsection{Defining the bifurcation currents and (locally uniform) estimates}
For this section, we follow the presentation of \cite{favredujardin,Dujardin2012}. Even though everything is presented in the case $q=1$ and for marked \emph{critical} points, the exact same arguments give what present below.

\smallskip

Let $\Lambda$ be a complex manifold and let $\hat{f}:\Lambda \times \P^q\to\Lambda\times\P^q$ be a holomorphic family of endomorphisms of $\P^q$ of algebraic degree $d \geq 2$: $\hat{f}$ is holomorphic and $\hat{f}(\lambda,z)=(\lambda, f_\lambda(z)) $ where  $f_\lambda$ is an endomorphism of $\P^q$ of algebraic degree $d$.

Let $\omega_{\P^q}$ be the standard Fubini-Study form on $\P^q$ and $\pi_\Lambda:\Lambda\times\P^q\to\Lambda$ and $\pi_{\P^q}:\Lambda\times\P^q\to\P^q$ be the canonical projections. Finally, let $\widehat{\omega}:=(\pi_{\P^q})^*\omega_{\P^q}$. It is known that
the sequence $d^{-n}(\widehat{f}^n)^*\widehat{\omega}$ converges to a closed positive $(1,1)$-current $\widehat{T}$ on $\Lambda\times\P^q$ with continuous potential. Moreover, for any $1\leq j\leq q$, it satisfies 
\[\widehat{f}^*\widehat{T}^j=d^j\cdot \widehat{T}\]
and $\widehat{T}^q|_{\{\lambda_0\}\times\P^1}=\mu_{\lambda_0}$ is the unique measure of maximal entropy $q\log d$ of $f_{\lambda_0}$ for all $\lambda_0\in\Lambda$.

For any $n\geq1$, we have $\widehat{T}=d^{-n}(\hat{f}^{n})^*\hat{\omega}+d^{-n}dd^c\widehat{u}_n$, where $(\widehat{u}_n)_n$ is a locally uniformly bounded sequence of continuous functions.

\medskip

Assume now that the family $\hat{f}$ is endowed with $k$ marked points i.e. we are given holomorphic maps $a_1,\ldots,a_k :\Lambda \to \P^q$. Let $\Gamma_{a_j}$ be the graph of the map $a_j$ and set
 \[\mathfrak{a}:=(a_1,\ldots,a_k).\]
 \begin{definition}
For $1\leq i\leq k$, the \emph{bifurcation current} $T_{a_i}$ of the point $a_i$ is the closed positive $(1,1)$-current on $\Lambda$ defined by
 \[T_{a_i}:=(\pi_{\Lambda})_*\left(\widehat{T}\wedge[\Gamma_{a_j}]\right)\]\\
 and we define the \emph{bifurcation current} $T_{\mathfrak{a}}$ of the $k$-tuple $\mathfrak{a}$ as
 \[T_{\mathfrak{a}}:=T_{a_1}+\cdots+T_{a_k}.\]
 \end{definition}

For any $\ell\geq0$, write
\[\mathfrak{a}_\ell(\lambda):=\left(f_\lambda^{\ell}(a_1(\lambda)),\ldots,f_\lambda^{\ell}(a_k(\lambda))\right), \ \lambda\in\Lambda.\]
Let now $K\Subset\Lambda$ be a compact subset of $\Lambda$ and let $\Omega$ be some compact neighborhood of $K$, then $(a_{\ell})^*(\omega_{\P^{q}})$ is bounded in mass in $\Omega$ by $C d^{\ell}$, where $C$ depends on $\Omega$ but not on $\ell$.

Applying verbatim the proof of \cite[Proposition-Definition~3.1]{favredujardin}, we have the following

\begin{lemma}\label{lm:DF}
For any $1\leq i\leq k$, the support of $T_{a_i}$ is the set of parameters $\lambda_0\in\Lambda$ such that the sequence $\{\lambda\mapsto f_\lambda^n(a_i(\lambda))\}$ is not a normal family at $\lambda_0$.

Moreover, writing $a_{i,\ell}(\lambda):=f_{\lambda}^\ell(a_i(\lambda))$, there exists a locally uniformly bounded family $(u_{i,\ell})$ of continuous functions on $\Lambda$ such that 
 \[(a_{i,\ell})^*(\omega_{\P^{q}})=d^{\ell} T_{a_i}+dd^cu_{i,\ell}, \ \text{on} \ \Lambda.\]
\end{lemma}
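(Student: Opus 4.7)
The plan is to work with the pullback via the section $\sigma_i:\Lambda\to\Lambda\times\P^q$ defined by $\sigma_i(\lambda):=(\lambda,a_i(\lambda))$, whose image is exactly $\Gamma_{a_i}$. For any closed positive $(1,1)$-current $S$ on $\Lambda\times\P^q$ with continuous potential, one has $(\pi_\Lambda)_*(S\wedge[\Gamma_{a_i}])=\sigma_i^*S$; in particular $T_{a_i}=\sigma_i^*\widehat{T}$, a closed positive $(1,1)$-current on $\Lambda$ with continuous potential.

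For the identity $(a_{i,\ell})^*(\omega_{\P^q})=d^\ell T_{a_i}+dd^cu_{i,\ell}$, I would simply pull the decomposition $\widehat{T}=d^{-\ell}(\widehat{f}^\ell)^*\widehat{\omega}+d^{-\ell}dd^c\widehat{u}_\ell$ back by $\sigma_i$. Since $\widehat{f}^\ell\circ\sigma_i(\lambda)=(\lambda,a_{i,\ell}(\lambda))$ and $\widehat{\omega}=\pi_{\P^q}^*\omega_{\P^q}$, this gives $\sigma_i^*(\widehat{f}^\ell)^*\widehat{\omega}=(a_{i,\ell})^*\omega_{\P^q}$, and rearranging yields the claim with $u_{i,\ell}:=-\widehat{u}_\ell\circ\sigma_i$. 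Continuity and local uniform boundedness in $\ell$ are inherited from $\widehat{u}_\ell$ and the holomorphy of $\sigma_i$.

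For the first inclusion of the support characterization, suppose $\{a_{i,n}\}$ is normal on an open neighborhood $U$ of $\lambda_0$. Then the masses $\|(a_{i,n})^*\omega_{\P^q}\|_K$ are bounded uniformly in $n$ on every $K\Subset U$, so testing the identity $d^\ell T_{a_i}=(a_{i,\ell})^*\omega_{\P^q}-dd^cu_{i,\ell}$ against any positive test form supported in $U$ gives a bound independent of $\ell$ (using that $u_{i,\ell}$ is locally uniformly bounded so the $dd^cu_{i,\ell}$ term yields $O(1)$ after integration by parts). Dividing by $d^\ell\to\infty$ forces $T_{a_i}|_U=0$, so $\lambda_0\notin\supp T_{a_i}$.

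For the reverse implication, I would use the Green function of a lift. On a small ball $B\ni\lambda_0$, choose a nonvanishing holomorphic lift $\tilde{a}_i:B\to\C^{q+1}\setminus\{0\}$ of $a_i$ and a holomorphic family $F_\lambda$ of homogeneous polynomial lifts of degree $d$. The psh functions $g_n(\lambda):=d^{-n}\log\|F_\lambda^n\tilde{a}_i(\lambda)\|$ satisfy $dd^cg_n=d^{-n}(a_{i,n})^*\omega_{\P^q}$, and the standard telescoping $|g_{n+1}-g_n|=O(d^{-n-1})$ (coming from $\log\|F_\lambda w\|=d\log\|w\|+O(1)$) yields exponential convergence $|g_n-G|\leq Cd^{-n}$, with $G$ psh and $dd^cG=T_{a_i}|_B$. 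If $\lambda_0\notin\supp T_{a_i}$, then $G$ is pluriharmonic on a smaller ball $B'$, hence $G=\log|H|$ for some nowhere vanishing holomorphic $H:B'\to\C^*$. Setting $v_n(\lambda):=F_\lambda^n\tilde{a}_i(\lambda)/H(\lambda)^{d^n}$, one has $\log\|v_n\|=d^n(g_n-G)$, uniformly bounded on $B'$; by Montel $\{v_n\}$ is normal in $B'$, and so is its projectivization $\{a_{i,n}\}$. The only delicate point is the exponential rate $|g_n-G|\leq Cd^{-n}$: mere pluriharmonicity of $G$ would not suffice to control $\|v_n\|$, so the telescoping estimate is the crucial ingredient.
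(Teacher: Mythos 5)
Your proposal is correct, and it follows essentially the same route as the paper, which does not write out an argument but invokes verbatim the proof of Proposition-Definition~3.1 of Dujardin--Favre: there, too, the decomposition comes from pulling back $\widehat{T}=d^{-\ell}(\widehat{f}^\ell)^*\widehat{\omega}+d^{-\ell}dd^c\widehat{u}_\ell$ along the graph, and the normality characterization is obtained via homogeneous lifts $F_\lambda$, the functions $d^{-n}\log\|F_\lambda^n\tilde{a}_i(\lambda)\|$ and the telescoping estimate giving uniform exponential convergence to the Green function. You also correctly single out the key point, namely that the two-sided bound $|g_n-G|\leq Cd^{-n}$ (not mere pluriharmonicity of $G$) is what keeps $\|v_n\|$ bounded above and below, so that normality of the lifts descends to the projectivized sequence.
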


 \subsubsection{Higher bifurcation currents of marked points}
As the convergence in Lemma~\ref{lm:DF} holds uniformly on compact subsets of $\Lambda$, for all $j\geq1$, we have
\[(a_{i,\ell})^*(\omega_{\P^{q}}^j)=d^{j\ell} T_{a_i}^j+dd^cO(d^{(j-1)\ell})\]
on compact subsets of $\Lambda$. In particular, one sees that
\begin{align}
T_{a_i}^{q+1}=0 \ \text{on} \ \Lambda,\label{nointersection>q}
\end{align}
and, for any $1\leq j\leq kq$, this easily gives
\[T_\mathfrak{a}^{j}=\sum_{m=1}^j\sum T_{a_{r_1}}^{\alpha_1}\wedge \cdots\wedge T_{a_{r_m}}^{\alpha_m},\]
where the second sum ranges over all $m$-tuples $(r_1,\ldots,r_m)$ of indices with $1\leq r_\ell\leq k$ and all $m$-tuples $A_m=(\alpha_1,\ldots,\alpha_m)$ with $1\leq \alpha_\ell\leq q$ and $\sum_\ell\alpha_\ell=j$, see e.g. \cite{Article1,Dujardin2012} for the case $q=1$. 

\medskip 
 
Let us still denote  $\pi_\Lambda:\Lambda \times(\P^q)^k\to \Lambda$ be the projection onto the first coordinate and for $1\leq i\leq k$, let $\pi_i:\Lambda \times(\P^q)^k\to \P^q$ be the projection onto the $i$-th factor of the product $(\P^q)^k$.  
Finally, we denote by $\Gamma_{\mathfrak{a}}$ the graph of $\mathfrak{a}$:
 \[\Gamma_{\mathfrak{a}}:= \{ (\lambda, (z_j) ), \ \forall j, z_j= a_j(\lambda)\} \subset \Lambda \times (\P^q)^k.\] 
Following verbatim the proof of~\cite[Lemma 2.6]{AGMV}, for any $i$, we get
\[T_{\mathfrak{a}}^{i}=\sum_{j_1,\ldots,j_k\geq0 \atop j_1+\cdots+j_k=i}\bigwedge_{\ell=1}^kT_{a_\ell}^{j_\ell}= \sum_{j_1,\ldots,j_k\geq0 \atop j_1+\cdots+j_k=i}(\pi_\Lambda)_* \left( \bigwedge_{\ell=1}^k \pi_\ell^*\left(\widehat{T}^{j_\ell}\right) \wedge \left[\Gamma_{\mathfrak{a}}\right]\right).\]


\subsection{Bifurcation Entropy for a $k$-tuple of marked points}

 We now assume that $\Lambda$ is a complex Kähler manifold endowed with a Kähler form $\omega_\Lambda$ and that the family $\hat{f}$ comes with $k$ marked points. Recall that we have set
 \[\mathfrak{a}:=(a_1,\ldots,a_k).\]
 In analogy with topological entropy, we define a notion of \emph{bifurcation entropy} of the marked family $(\hat{f},\mathfrak{a})$ in the following way. For $n \in \N$, we consider the \emph{$n$-bifurcation distance} on $\Lambda$ associated with $\mathfrak{a}$ defined by
$$d_{\mathfrak{a},n}(\lambda,\lambda'):= \max_{1\leq j\leq k} \max_{0\leq i\leq n-1}  d\left(f^i_\lambda\left(a_j(\lambda)\right),f^i_{\lambda'}\left(a_j(\lambda')\right)\right),$$
where $d(x,y)$ denotes the Fubini-Study distance on $\P^1$. We say that a set $E \subset \Lambda$ is $(d_{\mathfrak{a},n},\varepsilon)$-separated if :
$$ \min_{\lambda,\lambda'\in E, \ \lambda \neq \lambda'} d_{\mathfrak{a},n}(\lambda,\lambda')\geq \varepsilon. $$
\begin{definition}
Let $K \subset \Lambda$ be a compact set, we define the \emph{bifurcation entropy} $h_\mathfrak{a}(\hat{f},K)$ of the marked family $(\hat{f},\mathfrak{a})$ in $K$ as the quantity:
$$h_\mathfrak{a}(\hat{f},K):= \lim_{\varepsilon \to 0}  \ \limsup_n \ \frac{1}{n} \log \max \left\{ \mathrm{card}(E), \ E\subset K \ \mathrm{is} \  (d_{\mathfrak{a},n},\varepsilon)-\mathrm{separated} \right\}. $$
\end{definition}
A priori, $h_\mathfrak{a}(\hat{f},K) \in [0,+\infty ]$, but using the fact that, by definition, the bifurcation entropy is bounded by the topological entropy on $K \times (\P^q)^k$ of $\hat{f}_k:\Lambda \times (\P^q)^k \to \Lambda \times (\P^q)^k$ defined by $\hat{f}_k(\lambda,z_1,\dots z_k)=(\lambda, f_\lambda(z_1),\dots, f_\lambda(z_k))$, we have $h_\mathfrak{a}(\hat{f},K) \leq k q\log d$ (see \cite{gromov_enseignement} or the proof below). Observe that the entropy of $\hat{f}_k$ is already  $k q\log d$ on each (invariant set) $\{\lambda\} \times (\P^q)^k$ so it is not related to bifurcation phenomena. We show here that bifurcation entropy can be bounded from above outside the support of $T^i_\mathfrak{a}$. The proof follows the idea of the first author (\cite{dethelin_mathann}, see also \cite{dinh_JGA}):
 
\begin{theorem}\label{alaGromov} Pick $1\leq i\leq \dim(\Lambda)$. Assume that $K\cap \supp(T_{\mathfrak{a}}^{i})=\varnothing$. Then
		\[h_\mathfrak{a}(\hat{f},K)\leq (i-1)\log d.\]
\end{theorem}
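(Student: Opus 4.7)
My plan is to adapt the Gromov-type approach used by the first author in \cite{dethelin_mathann} and Dinh in \cite{dinh_JGA}: bound the cardinality of a $(d_{\mathfrak{a},n},\varepsilon)$-separated set by the volume of the graph of the orbit map, then estimate this volume using the vanishing of $T_\mathfrak{a}^i$ near $K$. Consider the orbit map
\[F_n:\lambda\in\Lambda\longmapsto\bigl(f_\lambda^s(a_j(\lambda))\bigr)_{1\le j\le k,\,0\le s\le n-1}\in(\P^q)^{nk},\]
and its graph $\Gamma_n\subset\Lambda\times(\P^q)^{nk}$. If $E\subset K$ is $(d_{\mathfrak{a},n},\varepsilon)$-separated, then the graph points $(\lambda,F_n(\lambda))$ for $\lambda\in E$ are $\varepsilon$-separated in the product metric, so Lelong's lower bound on the volume of the $\dim\Lambda$-dimensional complex submanifold $\Gamma_n$ near each such point yields $\#E\cdot c\varepsilon^{2\dim\Lambda}\le\mathrm{Vol}(\Gamma_n|_{K'})$ for a slightly enlarged compact $K'\Supset K$ still disjoint from $\supp(T_\mathfrak{a}^i)$. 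It remains to show $\mathrm{Vol}(\Gamma_n|_{K'})\le C\cdot n^{O(1)}\cdot d^{(i-1)n}$.

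Writing $\omega_{j,s}:=(a_{j,s})^*\omega_{\P^q}$, the binomial expansion
\[\mathrm{Vol}(\Gamma_n|_{K'})=\int_{K'}\Bigl(\omega_\Lambda+\sum_{j,s}\omega_{j,s}\Bigr)^{\dim\Lambda}=\sum_{p=0}^{\dim\Lambda}\binom{\dim\Lambda}{p}\sum_{\text{tuples}}\int_{K'}\omega_\Lambda^{\dim\Lambda-p}\wedge\bigwedge_{r=1}^p\omega_{j_r,s_r}\]
reduces the problem to proving the following key estimate: there is $C>0$ independent of $n$ such that for every $p\le\dim\Lambda$ and every tuple,
\[\int_{K'}\omega_\Lambda^{\dim\Lambda-p}\wedge\bigwedge_{r=1}^p\omega_{j_r,s_r}\le C\cdot d^{\sigma_{i-1}(s_1,\ldots,s_p)},\]
where $\sigma_{i-1}(s_1,\ldots,s_p)$ denotes the sum of the $(i-1)$ largest among $s_1,\ldots,s_p$. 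Pick a cut-off $\chi\ge0$ with $\chi\equiv 1$ on $K'$ and $\supp\chi\subset\Omega$ with $\Omega\cap\supp(T_\mathfrak{a}^i)=\varnothing$. By Lemma~\ref{lm:DF}, $\omega_{j,s}=d^sT_{a_j}+dd^cu_{j,s}$ with $\|u_{j,s}\|_{L^\infty(\Omega)}$ uniformly bounded in $s$.

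To prove the key estimate, iteratively split each $\omega_{j_r,s_r}$ into its $T$-part and its $dd^cu$-part. The $T$-branch accumulates a factor $d^{s_r}$; the $dd^cu$-branch, after integration by parts against the (closed) remaining factors and against $\chi$, replaces $dd^cu_{j_r,s_r}$ by $u_{j_r,s_r}\,dd^c\chi$, with $|dd^c\chi|$ dominated by a multiple of $\omega_\Lambda$. After full expansion, each surviving term is a nonnegative multiple of $d^{\sum_{r\in S}s_r}$ times an integral of a wedge of $m=|S|$ currents $T_{a_{j_{r_1}}}\wedge\cdots\wedge T_{a_{j_{r_m}}}$ against a smooth compactly supported form. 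If $m\ge i$, this wedge is dominated by a multiple of $T_\mathfrak{a}^i$, which vanishes on $\Omega$, so the term is $0$. If $m\le i-1$, the wedge has locally finite mass and the residual integral is bounded by a constant depending only on $\Omega$ and $\chi$; the term is therefore at most $Cd^{\sum_{r\in S}s_r}\le Cd^{\sigma_{i-1}}$.

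Summing the key estimate over the $(kn)^p$ tuples and over $p\le\dim\Lambda$, and using $\sigma_{i-1}\le(i-1)(n-1)$, we get $\mathrm{Vol}(\Gamma_n|_{K'})\le Cn^{\dim\Lambda}d^{(i-1)(n-1)}$, hence $\frac{1}{n}\log\#E\le(i-1)\log d+O((\log n)/n)$ and finally $h_\mathfrak{a}(\hat{f},K)\le(i-1)\log d$. The main obstacle is the rigorous justification of the iterated integration by parts when the potentials $u_{j,s}$ are only continuous and the $T_{a_j}$ are currents; this is handled by Bedford--Taylor theory, which guarantees that all iterated wedge products make sense as positive currents since the potentials are locally bounded, and by the closedness of every form entering the wedge, which localizes all boundary contributions to the single factor $dd^c\chi$.
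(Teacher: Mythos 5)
Your proposal is correct and follows essentially the same route as the paper's proof: bound the cardinality of a separated set by the volume of the graph of the orbit map via Lelong's inequality, expand the volume, decompose each $(a_{j,s})^*\omega_{\P^q}$ as $d^sT_{a_j}+dd^cu_{j,s}$ with uniformly bounded potentials (Lemma~\ref{lm:DF}), and iterate Stokes against cut-offs so that every surviving term either carries at most $i-1$ factors of bifurcation currents (bounded mass, cost at most $d^{(i-1)n}$) or contains an $i$-fold wedge which vanishes near $K$. The only cosmetic differences are that the paper works with the aggregated current $T_{\mathfrak{a}}$ and a nested chain of cut-off functions $\theta_1,\ldots,\theta_i$ to run the iterated integration by parts (a detail you should make explicit, since after each Stokes step the factor $dd^c\chi$ must be absorbed into $\omega_\Lambda$ times a new cut-off supported off $\supp(T_{\mathfrak{a}}^i)$), and your bookkeeping with $\sigma_{i-1}$ is a slightly sharper but equivalent version of the paper's bound.
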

\begin{proof}
Fix $n\geq1$ and $\varepsilon>0$.
Let $p_j:(\P^q)^k\to\P^q$ be the projection onto the $j$-th coordinate. We define a K\"ahler metric on $(\P^q)^k$ by letting $\omega:=p_1^*(\omega_{\P^q})+\cdots+p_k^*(\omega_{\P^q})$,
here $\omega_{\P^q}$ is the Fubini-Study metric on $\P^q$ of mass $1$.

Let $\omega_\Lambda$ be a K\"ahler metric on $\Lambda$ and $\omega_\ell:=P^*_\ell(\omega)$, where $P_\ell:\left((\P^q)^{k}\right)^n\to(\P^q)^k$ is the projection onto the $\ell$-th factor of the form $(\P^q)^k$.  If $\Pi_\Lambda:\Lambda\times(\P^q)^{kn}\to\Lambda$ is the canonical projection onto $\Lambda$, we still denote by $\omega_\Lambda$ the pull-back  $\Pi_\Lambda^*(\omega_\Lambda)$. We endow $\Lambda\times(\P^q)^{kn}$ with the product K\"ahler metric 
\[\Omega:=\omega_\Lambda+\sum_{\ell=1}^{n}\omega_\ell\]
and denote by $\tilde{d}$ the induced distance on $\Lambda\times(\P^q)^{kn}$.

As before, set $\mathfrak{a}_\ell(\lambda):=(f_\lambda^\ell(a_1(\lambda)),\ldots,f_\lambda^\ell(a_k(\lambda)))$ for all $\lambda\in\Lambda$ and all $\ell\geq0$.
Let $\Gamma_n\subset\Lambda\times (\P^q)^{kn}$ be the graph of the map $\mathfrak{A}^n:=(\mathfrak{a}_0,\ldots,\mathfrak{a}_{n-1}):\Lambda\to (\P^q)^{kn}$, and pick a set $E\subset \Lambda$ which is $(d_{\mathfrak{a},n},\varepsilon)$-separated and let $N:=\mathrm{Card}(E)$. For $\lambda \in E$, let $\tilde{\lambda}:=\mathfrak{A}^n(\lambda)$. If $\tilde{E}:=\mathfrak{A}^n(E)$, we have $\tilde{d}(\tilde{\lambda}_1,\tilde{\lambda}_2) \geq\varepsilon$,
for any distinct $\tilde{\lambda}_1,\tilde{\lambda}_1\in \tilde{E}$. In particular, if $K_\varepsilon:=\{\lambda\in\Lambda\, ; d_\Lambda(\lambda,K)\leq\varepsilon\}$, we have
\[\bigcup_{\tilde{\lambda}\in\tilde{E}}B_{\tilde{d}}\left(\tilde{\lambda},\frac{\varepsilon}{2}\right)\cap\Gamma_n\subset\Pi^{-1}_\Lambda(K_\varepsilon)\cap\Gamma_n,\]
and the union is disjoint. So
\begin{align*}
\mathrm{Vol}_\Omega\left(\Pi^{-1}_\Lambda(K_\varepsilon)\cap\Gamma_n\right) & \geq  \sum_{\tilde{\lambda}\in\tilde{E}}\mathrm{Vol}_\Omega\left(B_{\tilde{d}}\left(\tilde{\lambda},\frac{\varepsilon}{2}\right)\cap\Gamma_n\right).
\end{align*}
Since $\Gamma_n$ is an analytic subvariety of $\Lambda\times (\P^q)^{kn}$ of complex dimension $\dim(\Lambda)$ passing through the center of the balls $B_{\tilde{d}}(\tilde{\lambda},\varepsilon/2)$,  Lelong's inequality implies
\[\mathrm{Vol}_\Omega\left(B_{\tilde{d}}\left(\tilde{\lambda},\frac{\varepsilon}{2}\right)\cap\Gamma_n\right) \geq  c\varepsilon^{2\dim(\Lambda)} \]
where $c$ is a constant that does not depend on $n$. Since $\mathrm{Card}(\tilde{E})=\mathrm{Card}(E)=N$, we get
\begin{align}
\mathrm{Vol}_\Omega\left(\Pi^{-1}_\Lambda(K_\varepsilon)\cap\Gamma_n\right) & \geq N\cdot c\varepsilon^{2\dim(\Lambda)}.\label{eq:volume}
\end{align}

We now bound $\mathrm{Vol}_\Omega\left(\Pi^{-1}_\Lambda(K_\varepsilon)\cap\Gamma_n\right)$ from above. Let $A_s$ denote the set of $\alpha:= (\alpha_1,\ldots,\alpha_k)$ such that $0\leq \alpha_j\leq q$ and $\sum \alpha_j = s$. Let $L_n$ be the set of $k$-tuples $\ell=(\ell_1,\dots,\ell_k)$ of distinct integers in $\{1,\dots, n \}$. Note that the cardinality of 
$A_s$ is $\leq (q+1)^s$ and the cardinality of $L^s_n$ is $\leq n^k$.

Write $d_\Lambda:=\dim(\Lambda)$. Up to reducing $\varepsilon>0$, we can assume $K_{(d_\Lambda+1)\varepsilon}\cap \supp(T_{a_1}^{\alpha_1}\wedge \cdots \wedge T_{a_{k}}^{\alpha_k})=\varnothing$ for all $\alpha \in A_i$. Choose $\mathcal{C}^2$ non-negative functions $\theta_1,\ldots,\theta_i$ on $\Lambda$ such that $\theta_j\equiv1$ on $K_{j\varepsilon}$ and $\supp(\theta_j)\subset K_{(j+1)\varepsilon}$ for all $1\leq j\leq i$. We then have 
\begin{align*}
\mathrm{Vol}_\Omega\left(\Pi^{-1}_\Lambda(K_\varepsilon)\cap\Gamma_n\right) & \leq  \int_{\Lambda\times(\P^q)^{kn}} \left(\theta_1\circ\Pi_\Lambda\right) \, \Omega^{d_\Lambda}\wedge[\Gamma_n]\\
& \leq \int_{\Lambda}\theta_1 \cdot(\Pi_\Lambda)_*\left(\sum_{s=0}^{d_\Lambda} \binom{d_\Lambda}{s} \left(\omega_\Lambda\right)^{d_\Lambda-s}\wedge\sum_{\alpha \in A_s}\sum_{\ell \in L_n} \bigwedge_{j=1}^s \omega_{\ell_j}^{\alpha_j}\wedge[\Gamma_n] \right)\\
& \leq \int_{\Lambda}\theta_1 \left( \sum_{s=0}^{d_\Lambda} \binom{d_\Lambda}{s} \omega_\Lambda^{d_\Lambda-s}\wedge\sum_{\alpha\in A_s} \sum_{\ell \in L_n} \bigwedge_{j=1}^k(\mathfrak{a}_{\ell_j})^*(\omega^{\alpha_j}) \right).
\end{align*}
Fix an integer $s \leq d_\Lambda$, a $k$-tuple $\alpha \in A_s$ and a $k$-tuple $\ell \in L_n$. Recall that, by definition, we have $T_\mathfrak{a}=\sum_{j\leq k}T_{a_j}$. As seen in Section~\ref{background}, there exists a locally uniformly bounded family $(u_\ell)$ of continuous functions on $\Lambda$ such that 
\[(\mathfrak{a}_\ell)^*(\omega)=d^\ell T_{\mathfrak{a}}+dd^cu_\ell\]
for all $\ell\geq0$ and that $(\mathfrak{a}_\ell)^*(\omega^j)=((\mathfrak{a}_\ell)^*(\omega))^j$ for all $1\leq j\leq q$. Assume for simplicity that $\alpha_1 \neq 0$. Then, letting $S=\bigwedge_{j=2}^k(\mathfrak{a}_{\ell_j})^*(\omega^{\alpha_j})$, by Stokes  and using $\theta_{2}\equiv1$ on $\supp(\theta_1)$:
\begin{align*}
\int_\Lambda\theta_1 \cdot\omega_\Lambda^{d_\Lambda-s}\wedge \bigwedge_{j=1}^k(\mathfrak{a}_{\ell_j})^*(\omega^{\alpha_j}) 
=& \int_\Lambda\theta_1 \cdot\omega_\Lambda^{d_\Lambda-s}\wedge \left(d^{\ell_1} T_{\mathfrak{a}}+dd^cu_{\ell_1} \right)^{\alpha_1} \wedge S\\
 =& \int_\Lambda\theta_1 \cdot\omega_\Lambda^{d_\Lambda-s}\wedge d^{\ell_1} T_{\mathfrak{a}}\wedge \left(d^{\ell_1} T_{\mathfrak{a}}+dd^cu_{\ell_1} \right)^{\alpha_1-1}\wedge S \\
+& \int_\Lambda  u_{\ell_1} dd^c(\theta_1) \wedge \omega_\Lambda^{d_\Lambda-s}\wedge\left(d^{\ell_1} T_{\mathfrak{a}}+dd^cu_{\ell_1} \right)^{\alpha_1-1}\wedge S \\
\leq& \, d^n \int_\Lambda\theta_1 \cdot\omega_\Lambda^{d_\Lambda-s}\wedge  T_{\mathfrak{a}}\wedge\left(d^{\ell_1} T_{\mathfrak{a}}+dd^cu_{\ell_1} \right)^{\alpha_1-1}\wedge S \\
+& C \int_\Lambda \theta_2  \omega_\Lambda^{d_\Lambda-s+1}\wedge \left(d^{\ell_1} T_{\mathfrak{a}}+dd^cu_{\ell_1} \right)^{\alpha_1-1}\wedge S, 
\end{align*}
where $C$ is a constant that depends on the $\mathcal{C}^2$-norm of $\theta_1$ and the supremum of the $L^\infty$-norm of the $(u_\ell)$ but not on $n$. Iterating the process, we get the bound:
\begin{align*}
\int_\Lambda\theta_1 \cdot\omega_\Lambda^{d_\Lambda-s}\wedge \bigwedge_{j=1}^s(\mathfrak{a}_{\ell_j})^*(\omega^{\alpha_j}) 
\leq C\sum_{j\leq s} d^{jn} \int_\Lambda\theta_s \cdot\omega_\Lambda^{d_\Lambda-j}\wedge  T^{j}_{\mathfrak{a}} 
\end{align*}
where $C$ is (another) constant that does not depend on $n$. The quantity
$\int_\Lambda\theta_s \cdot\omega_\Lambda^{d_\Lambda-j}\wedge  T^{j}_{\mathfrak{a}} $ is bounded by $\int_{K_{(d_\Lambda+1)\varepsilon}} \omega_\Lambda^{d_\Lambda-j}\wedge  T^{j}_{\mathfrak{a}}$. By hypothesis, we have 
 $T_\mathfrak{a}^{j}=0$ on $K_{(d_\Lambda+1)\varepsilon}$ for all $j\geq i$. In particular, $\int_{K_{(d_\Lambda+1)\varepsilon}} \omega_\Lambda^{d_\Lambda-j}\wedge  T^{j}_{\mathfrak{a}}=0$ for $j \geq i $. It follows that: 
\begin{align*}
\int_\Lambda\theta_1 \cdot\omega_\Lambda^{d_\Lambda-s}\wedge \bigwedge_{j=1}^k(\mathfrak{a}_{\ell_j})^*(\omega^{\alpha_j}) 
\leq C\sum_{j < i} d^{jn} \int_{K_{(d_\Lambda+1)\varepsilon}} \omega_\Lambda^{d_\Lambda-j}\wedge  T^{j}_{\mathfrak{a}}
\leq C'd^{(i-1)n} 
\end{align*}
  where $C'$ depends on (a neighborhood of) $K$ and $j$ but not on $n$. Summing over all $\alpha \in A_s$ and all $\ell \in L_n$ implies 
\begin{align*}
\mathrm{Vol}_\Omega\left(\Pi^{-1}_\Lambda(K_\varepsilon)\cap\Gamma_n\right) & \leq  C''\cdot n^{kq}\cdot d^{n(i-1)}
\end{align*}
again for some constant $C''>0$ which depends on $K$ but not on $n$. The inequality \eqref{eq:volume} gives
\[\frac{1}{n}\log N\leq (i-1)\log d +kq\frac{\log n}{n}+\frac{1}{n}\log C''-\frac{1}{n}\log(c\varepsilon^{2\dim(\Lambda)})\]
and the conclusion follows letting $n\to\infty$.
\end{proof}

\subsection{Metric bifurcation entropy}

\subsubsection{Metric entropy of a probability measure}
Pick a probability measure $\nu$ on $\Lambda$. Let $K\subset \Lambda$ be a compact set with $\nu(K)>0$. For any Borel set $X\subset K$, let
\[h_{\nu}(\hat{f},\mathfrak{a},K,X):=\lim_{\varepsilon\to0}\limsup_{n\to\infty}\frac{1}{n}\max\left\{\log\mathrm{Card}(E)\, ; \ E\subset X \ \text{is} \ (d_{\mathfrak{a},n},\varepsilon)\text{-separated}\right\}.\]
For $0<\kappa< \nu(K)$, we then let
\[h_{\nu}(\hat{f},\mathfrak{a},K,\kappa):=\inf\left\{h_{\nu}(\hat{f},\mathfrak{a},K,X)\, ; \ \nu(X)> \nu(K)- \kappa \right\}.\]
Finally, we define:
\[h_{\nu}(\hat{f},\mathfrak{a},K):= \sup_{\kappa\to 0} h_{\nu}(\hat{f},\mathfrak{a},K,\kappa)  \]
Observe that for any compact sets $K_1$ and $K_2$, it follows from our definition that \[h_{\nu}(\hat{f},\mathfrak{a},K_1\cup K_2 )= \max(h_{\nu}(\hat{f},\mathfrak{a},K_1 ),h_{\nu}(\hat{f},\mathfrak{a},K_2 )).\]
 We define the \emph{metric bifurcation entropy} of $\nu $ as
\[h_{\nu}(\hat{f},\mathfrak{a}):= \sup_K h_{\nu}(\hat{f},\mathfrak{a},K).\]
 It will be convenient, in what follows, to consider a small variation of the Bowen bifurcation distance defined as:
\[\tilde{d}_{\mathfrak{a},n}(\lambda,\lambda'):=\max\left(d_{\mathfrak{a},n}(\lambda,\lambda'), d_\Lambda(\lambda,\lambda')\right).\]
Notice that if we define accordingly $\tilde{h}_\mathfrak{a}(\hat{f},K)$, $\tilde{h}_{\nu}(\hat{f},\mathfrak{a},K,X)$,  $\tilde{h}_{\nu}(\hat{f},\mathfrak{a},K)$ and $\tilde{h}_{\nu}(\hat{f},\mathfrak{a})$ using  $\tilde{d}_{\mathfrak{a},n}$ instead of $d_{\mathfrak{a},n}$ then as $\tilde{d}_{\mathfrak{a},n} \geq d_{\mathfrak{a},n}$
we have that $\tilde{h}_\mathfrak{a}(\hat{f},K) \geq h_\mathfrak{a}(\hat{f},K)$,  $\tilde{h}_{\nu}(\hat{f},\mathfrak{a},K,X)\geq h_{\nu}(\hat{f},\mathfrak{a},K,X)$, $\tilde{h}_{\nu}(\hat{f},\mathfrak{a},K) \geq h_{\nu}(\hat{f},\mathfrak{a},K)$, $\tilde{h}_{\nu}(\hat{f},\mathfrak{a})\geq h_{\nu}(\hat{f},\mathfrak{a})$. On the other hand, by compacity  of $K \subset \Lambda$, a $d_\Lambda$-separated set in $K$ as bounded cardinality so the above inequalities are in fact equalities. In particular, we will still denote the above entropy as $h$  (and not $\tilde{h}$). 

Fix $\varepsilon>0$. For any integer $n$, any $\alpha>0$ and any $\gamma>0$, we let
\[X_{n,\gamma}(\nu,\alpha):=\left\{\lambda\in \Lambda\, ; \ \nu\left(B_{\tilde{d}_{\mathfrak{a},n}}(\lambda,\varepsilon)\right)\leq e^{-n(\alpha-\gamma)}\right\}.\]
The following is a volume argument.
\begin{proposition}\label{classical_volume_argument}
Fix $\alpha>0$ and a compact set $K\subset \Lambda$. Assume that for any $\kappa>0$ and any $\gamma>0$, there exists $n_0\geq1$ such that for all $n\geq n_0$,  $\nu(X_{n,\gamma}(\nu,\alpha) \cap K)\geq \nu(K)-\kappa >0$.
Then, $h_\nu(\hat{f},\mathfrak{a},K)\geq \alpha$.
\end{proposition}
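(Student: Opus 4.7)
The plan is a Hausdorff-type volume argument. Given $\kappa\in(0,\nu(K))$ and any Borel set $X\subset K$ with $\nu(X)>\nu(K)-\kappa$, I aim to show $h_\nu(\hat{f},\mathfrak{a},K,X)\geq\alpha$; since $X$ is then arbitrary, the infimum defining $h_\nu(\hat{f},\mathfrak{a},K,\kappa)$ satisfies the same bound, and $\sup_{\kappa\to 0}$ closes the argument. Because the hypothesis is phrased in terms of $\tilde{d}_{\mathfrak{a},n}$-balls, I would work with $(\tilde{d}_{\mathfrak{a},n},\varepsilon)$-separated sets throughout and, at the very end, invoke the equality $\tilde{h}_\nu=h_\nu$ noted just after the definition of metric bifurcation entropy.

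Fix $\varepsilon>0$ and $\gamma>0$, and pick an auxiliary $\kappa'>0$ with $\nu(K)-\kappa-\kappa'>0$. Apply the hypothesis to $(\kappa',\gamma)$ to get $n_0$ such that $\nu(X_{n,\gamma}(\nu,\alpha)\cap K)\geq\nu(K)-\kappa'$ for all $n\geq n_0$; since $X\subset K$, a routine set-theoretic estimate yields
\[\nu\bigl(X\cap X_{n,\gamma}(\nu,\alpha)\bigr)\geq \nu(X)-\kappa'>\nu(K)-\kappa-\kappa'>0.\]
Let $E_n\subset X\cap X_{n,\gamma}(\nu,\alpha)$ be a maximal $(\tilde{d}_{\mathfrak{a},n},\varepsilon)$-separated set. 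By maximality the balls $B_{\tilde{d}_{\mathfrak{a},n}}(\lambda,\varepsilon)$, $\lambda\in E_n$, cover $X\cap X_{n,\gamma}(\nu,\alpha)$; and because each center $\lambda$ lies in $X_{n,\gamma}(\nu,\alpha)$, each such ball has $\nu$-mass at most $e^{-n(\alpha-\gamma)}$. Summing gives
\[|E_n|\geq\bigl(\nu(K)-\kappa-\kappa'\bigr)\,e^{n(\alpha-\gamma)},\]
so $\limsup_n n^{-1}\log|E_n|\geq\alpha-\gamma$.

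This $\limsup$ is, by definition, bounded above by the $(\tilde d_{\mathfrak a,n},\varepsilon)$-version of the entropy of $X$; letting $\varepsilon\to0$ (the right-hand side is $\varepsilon$-free) gives $\tilde{h}_\nu(\hat{f},\mathfrak{a},K,X)\geq\alpha-\gamma$, and $\gamma\to 0$ yields $\tilde{h}_\nu(\hat{f},\mathfrak{a},K,X)\geq\alpha$. The identification $\tilde{h}_\nu=h_\nu$ then transfers this to $h_\nu(\hat{f},\mathfrak{a},K,X)\geq\alpha$, and the proposition follows as outlined above. The only real ``obstacle'' is bookkeeping---selecting the auxiliary $\kappa'$ so that the hypothesis applies in the precise form needed, and being consistent about which of the two distances labels the separated sets; no further dynamical input beyond the hypothesis is required.
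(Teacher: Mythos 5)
Your argument is correct and is essentially the paper's own proof: you use the bound $\nu(B_{\tilde{d}_{\mathfrak{a},n}}(\lambda,\varepsilon))\leq e^{-n(\alpha-\gamma)}$ for centers in $X_{n,\gamma}(\nu,\alpha)$ to force any cover of $X\cap X_{n,\gamma}(\nu,\alpha)$ by such balls to have at least $\nu(K)-\kappa-\kappa'$ times $e^{n(\alpha-\gamma)}$ elements, realizing this cover through a maximal $(\tilde{d}_{\mathfrak{a},n},\varepsilon)$-separated set, which is the same packing argument the paper carries out by greedily selecting points outside previously chosen balls. The only differences are bookkeeping (your auxiliary $\kappa'$ versus the paper's choice of $\kappa$ relative to $\nu(X)/2$), and in fact your version is slightly cleaner since you keep the separated set inside $X$ itself.
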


\begin{proof} Let $X\subset K$ such that $\nu(X)>0$. 
Choose $\kappa>0$ small enough and pick $n_0\geq1$ so that $\nu(X_{n,\gamma}(\nu,\alpha) \cap K)\geq \nu(K)-\nu(X)/2$ and let $X':=X_{n,\gamma}(\nu,\alpha)\cap K$. By construction, $\nu(X')>0$.

Choose $\lambda_0\in X'$ and, recursively choose $\lambda_{k+1}\in X'\setminus\bigcup_{j\leq k}B_{\tilde{d}_{\mathfrak{a},n}}(\lambda_j,\varepsilon)$, which is possible as long as $X'\setminus\bigcup_{j\leq k}B_{\tilde{d}_{\mathfrak{a},n}}(\lambda_j,\varepsilon)\neq\varnothing$. Let $N\geq1$ be the cardinal of the set $E:=\{\lambda_j\, ; \ j\}$. Remark that $E$ is $(\tilde{d}_{\mathfrak{a},n},\varepsilon)$-separated and for all $k\leq N$,
\[\nu\left(\bigcup_{j=0}^{k-1}B_{\tilde{d}_{\mathfrak{a},n}}(\lambda_j,\varepsilon)\right)\leq ke^{-n(\alpha-\gamma)}\leq \nu(X').\]
In particular, this construction is possible, as long as $k\leq \nu(X')e^{n(\alpha-\gamma)}$. Whence $N\geq \nu(X')e^{n(\alpha-\gamma)}$ and
\[\frac{1}{n}\log(N)\geq\frac{1}{n}\log \nu(X')+\alpha-\gamma,\]
and making $n\to\infty$, we find $h_{\nu}(\hat{f},\mathfrak{a},K,X)\geq \alpha-\gamma$. Making $\gamma\to0$, we find $h_{\nu}(\hat{f},\mathfrak{a},K,X)\geq \alpha$. The result follows as $X$ is arbitrary.
\end{proof}

\subsubsection{The entropy of the bifurcation measure of a $k$-tuple of marked points}
We now come to the heart of the section.
Let $d_\Lambda:=\dim\Lambda$ and let $\mu_\mathfrak{a}$ be the probability measure which is proportional to $T_\mathfrak{a}^{d_\Lambda}$. 
The measure $\mu_\mathfrak{a}$ is the \emph{bifurcation measure} of the $k$-tuple $\mathfrak{a}=(a_1,\ldots,a_k)$ in $\Lambda$.

\begin{theorem}\label{minoration} Let $K$ be a compact set in $\Lambda$. 
 For any $\gamma>0$, there exists $n_0$ and $\varepsilon>0$ such that for all $\lambda_0 \in K$ and all $n\geq n_0$:
\begin{align*}
\mu_\mathfrak{a}\left(B_{\tilde{d}_{\mathfrak{a},n}}(\lambda_0,\varepsilon)\right)&\leq e^{-n d_\Lambda\log d + n \gamma }. 
\end{align*}
Consequently, for any compact set $K$ such that $\mu_\mathfrak{a}(K)>0$ then
\[ h_{\mu_\mathfrak{a}}(\hat{f},\mathfrak{a},K)= d_\Lambda \log d. \]
Thus $h_{\mu_\mathfrak{a}}(\hat{f},\mathfrak{a})= d_\Lambda \log d$.
\end{theorem}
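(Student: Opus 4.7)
My plan is to prove the pointwise volume estimate first and then deduce the two equalities. Once the bound $\mu_\mathfrak{a}(B_{\tilde{d}_{\mathfrak{a},n}}(\lambda_0,\varepsilon)) \leq e^{-n d_\Lambda \log d + n\gamma}$ is established uniformly for $\lambda_0 \in K$ and $n \geq n_0$, the lower bound $h_{\mu_\mathfrak{a}}(\hat f,\mathfrak{a},K) \geq d_\Lambda \log d$ follows directly from Proposition~\ref{classical_volume_argument}. The matching upper bound $h_{\mu_\mathfrak{a}}(\hat f,\mathfrak{a},K) \leq h_\mathfrak{a}(\hat f, K) \leq d_\Lambda \log d$ comes from Theorem~\ref{alaGromov} applied with $i = d_\Lambda + 1$, since $T_\mathfrak{a}^{d_\Lambda+1} \equiv 0$ on $\Lambda$ for bidegree reasons; the global identity $h_{\mu_\mathfrak{a}}(\hat f,\mathfrak{a})= d_\Lambda \log d$ will follow by taking a supremum over compacts with $\mu_\mathfrak{a}(K) > 0$.

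\textbf{Setup on the product.} I work on $X := \Lambda \times (\P^q)^k$ with the dynamics $F(\lambda, z_1, \ldots, z_k) := (\lambda, f_\lambda(z_1), \ldots, f_\lambda(z_k))$ and the embedding $\iota : \lambda \mapsto (\lambda, \mathfrak{a}(\lambda))$, whose image is $\Gamma_\mathfrak{a}$. A short calculation identifies the Bowen distance $d_{n,F}$ on $X$, pulled back via $\iota$, with $\tilde{d}_{\mathfrak{a},n}$ on $\Lambda$. I then apply Lemma~\ref{dynamical_cutoff} to $F$ at $x_0 := \iota(\lambda_0)$ to obtain a cutoff $\theta_n$ on $X$ supported in $B_{d_{n,F}}(x_0, 2\varepsilon)$, equal to $1$ on $B_{d_{n,F}}(x_0,\varepsilon)$, with $\pm dd^c\theta_n$ dominated by $(Cn^2/\varepsilon^2)\sum_{i=0}^{n-1}(F^i)^*\Omega$ for a K\"ahler form $\Omega$ on $X$. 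Using the decomposition of Section~\ref{background},
\[ T_\mathfrak{a}^{d_\Lambda} \;=\; \sum_{j_1+\cdots+j_k = d_\Lambda} (\pi_\Lambda)_* \Bigl( \bigwedge_{\ell=1}^k (\pi_\Lambda,\pi_\ell)^*(\widehat{T}^{j_\ell}) \wedge [\Gamma_\mathfrak{a}] \Bigr), \]
I will bound $\mu_\mathfrak{a}(B_{\tilde{d}_{\mathfrak{a},n}}(\lambda_0,\varepsilon))$ by a finite sum of integrals of the form $\int_{\Gamma_\mathfrak{a}} \theta_n \bigwedge_\ell (\pi_\Lambda,\pi_\ell)^*\widehat{T}^{j_\ell}$.

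\textbf{Approximation and Stokes.} Next I use the identity $(\pi_\Lambda,\pi_\ell)^*\widehat{T} = d^{-n}(F^n)^*\omega^{(\ell)} + d^{-n} dd^c v_{n,\ell}$, with $\omega^{(\ell)} := \pi_\ell^* \omega_{\P^q}$ and $v_{n,\ell} := (\pi_\Lambda,\pi_\ell)^*\hat u_n$ locally uniformly bounded and continuous, and I expand each factor $((\pi_\Lambda,\pi_\ell)^*\widehat{T})^{j_\ell}$. The prefactor $d^{-n d_\Lambda}$ factors out of every term. The main term involves only $(F^n)^*\omega^{(\ell)}$ factors; every error term contains at least one $dd^c v_{n,\ell}$. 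I will iterate Stokes' formula to transfer each $dd^c$ either onto another bounded potential or onto $\theta_n$, and then use Lemma~\ref{dynamical_cutoff} to replace $\pm dd^c\theta_n$ with $(Cn^2/\varepsilon^2)\sum_{i=0}^{n-1}(F^i)^*\Omega$. Every resulting integrand will take the shape $\int_{B_{d_{n,F}}(x_0,2\varepsilon)\cap\Gamma_\mathfrak{a}} \bigwedge_{j=1}^{d_\Lambda} (F^{i_j})^*\Omega$ with $0 \leq i_j \leq n-1$, up to a polynomial-in-$n$ factor arising from multinomial coefficients and from the sum over $i$ in the cutoff bound.

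\textbf{Applying Yomdin and wrap-up.} Each such integral fits exactly the hypothesis of Proposition~\ref{alaYomdin} with $M := \Gamma_\mathfrak{a}$, which has uniformly bounded geometry on a fixed compact neighborhood of $K$: given $\gamma>0$, choosing $\varepsilon$ small and $n$ large enough bounds each integral by $e^{n\gamma/2}$. The number of terms produced by the expansion and iterated Stokes is polynomial in $n$, hence absorbed into $e^{n\gamma}$, and multiplying by the $d^{-n d_\Lambda}$ prefactor yields the claimed estimate. The main obstacle I anticipate is the bookkeeping of the Stokes iteration: one must verify that every error term reduces cleanly to a Yomdin-type integral with only $n^{O(1)}$ polynomial overhead, which forces the quantifier order (first fix $\gamma$, then $\varepsilon$, then $n$) so that the $\varepsilon^{-O(1)}$ factors arising from $dd^c\theta_n$ are frozen before being compared with the exponential $e^{n\gamma}$.
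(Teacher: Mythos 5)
Your proposal is correct and follows essentially the same route as the paper: lift to $\Lambda\times(\P^q)^k$ with $\hat f_k$, decompose $\mu_\mathfrak{a}$ via the graph $\Gamma_\mathfrak{a}$, replace each $\widehat T$ by $d^{-n+1}(\hat f^{n-1})^*\hat\omega+d^{-n+1}dd^c\widehat u_n$, iterate Stokes against the dynamical cut-off of Lemma~\ref{dynamical_cutoff}, invoke Proposition~\ref{alaYomdin} for $M=\Gamma_\mathfrak{a}$, and conclude with Proposition~\ref{classical_volume_argument} together with the Gromov-type upper bound. The only details you gloss over are harmless: the paper re-chooses cut-offs so the ball radius doubles at each of the (at most $d_\Lambda$) Stokes steps, ending at radius $2^{d_\Lambda}\varepsilon$ rather than $2\varepsilon$, and the upper bound is obtained from the volume argument of Theorem~\ref{alaGromov} (whose proof gives $h_\mathfrak{a}(\hat f,K)\le d_\Lambda\log d$ unconditionally, which is what your ``$i=d_\Lambda+1$'' use amounts to since $T_\mathfrak{a}^{d_\Lambda+1}\equiv0$).
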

\begin{proof}
Choose $\varepsilon>0$ and $\lambda_0\in  K$. Pick an integer $n\geq1$.  We let $X:= \Lambda \times(\P^q)^k$ and $\hat{f}_k:X \to X $ be the map defined by $\hat{f}_k(\lambda, (z_j)_{j\leq k}) = (\lambda,f_\lambda(z_1),\dots, f_\lambda(z_k))$. We consider the distance on $X$ defined by 
\[d( (\lambda, (z_j) ), (\lambda', (z'_j) )):= \max\left( d_\Lambda(\lambda,\lambda'),  \max_j d_{\P^q}(z_j,z'_j) \right) \]
where $d_\Lambda$ is the distance on $\Lambda$ induced by $\omega_\Lambda$ and $d_{\P^q}$ the distance on $\P^q$ induced by $\omega_{\P^q}$. We let $d_n$ denote the Bowen distance on $X$ associated to $d$:
\[ d_n( (\lambda, (z_j) ), (\lambda', (z'_j) )):= \max_{0\leq i\leq n-1} d( \hat{f}_k^i(\lambda, (z_j) ), \hat{f}_k^i(\lambda', (z'_j) )), \]
and we denote by $B_{d_n}( (\lambda, (z_j)), \varepsilon)$ the associated ball. 
With the notations of Section~\ref{background}, recall that (up to a multiplicative constant)
 \[\mu_\mathfrak{a}=  \sum_{j_1,\ldots,j_k\geq0 \atop j_1+\cdots+j_k=d_\Lambda }(\pi_\Lambda)_* \left( \bigwedge_{\ell=1}^k \pi_\ell^*\left(\widehat{T}^{j_\ell}\right) \wedge \left[\Gamma_{\mathfrak{a}}\right]\right).\]
 It is enough to prove the wanted estimate for each term of the sum. So from now on, fix a $k$-tuple $J:=(j_1,\ldots,j_k)$ with $j_1+\cdots+j_k=d_\Lambda $ and let 
 \[\mu^J_\mathfrak{a}:= (\pi_\Lambda)_* \left( \bigwedge_{\ell=1}^k \pi_\ell^*\left(\widehat{T}^{j_\ell}\right) \wedge \left[\Gamma_{\mathfrak{a}}\right]\right). \]
   Then
\begin{align*}
\mu^J_\mathfrak{a}\left(B_{\tilde{d}_{\mathfrak{a},n}}(\lambda_0,\varepsilon)\right)& = \int_{(\pi_\Lambda)^{-1}(B_{\tilde{d}_{\mathfrak{a},n}}(\lambda_0,\varepsilon))} \bigwedge_{\ell=1}^k \pi_\ell^*\left(\widehat{T}^{j_\ell}\right) \wedge \left[\Gamma_{\mathfrak{a}}\right] \\
&= \int_{B_{d_n} \left( \left(\lambda_0,  \mathfrak{a}\left(\lambda_0\right)\right), \varepsilon\right)}\bigwedge_{\ell=1}^k \pi_\ell^*\left(\widehat{T}^{j_\ell}\right) \wedge[\Gamma_{\mathfrak{a}}].
\end{align*}
Moreover, since $\widehat{T}=d^{-n+1}(\hat{f}^{n-1})^*\hat{\omega}+d^{-n+1}dd^c\widehat{u}_n$, where $(\widehat{u}_n)$ is a locally uniformly bounded family of continuous functions, letting  $u_{n,j}:=\widehat{u}_n\circ\pi_j$, we get
\begin{align*}
\mu^J_\mathfrak{a}\left(B_{\tilde{d}_{\mathfrak{a},n}}(\lambda_0,\varepsilon)\right)
&\leq d^{-(n-1)d_\Lambda}\int_{B_{d_n} \left( \left(\lambda_0, \mathfrak{a}\left(\lambda_0\right)\right), \varepsilon\right)}\bigwedge_{\ell=1}^k (\hat{f}_k^{n-1})^*( \Omega + dd^cu_{n,\ell})^{j_\ell}\wedge[\Gamma_{\mathfrak{a}}],
\end{align*}
where $\Omega:= \omega_\Lambda+\sum_{\ell=1}^k \pi_\ell^*(\omega_{\P^q})$. 
Let $\theta_n$ be the cut-off function of Lemma~\ref{dynamical_cutoff} in  $B_{d_n} \left( \left(\lambda_0, \mathfrak{a}\left(\lambda_0\right)\right), \varepsilon\right) $. Let $S:=\bigwedge_{\ell=2}^k (\hat{f}_k^{n-1})^*( \Omega + dd^cu_{n,\ell})^{j_\ell}\wedge[\Gamma_{\mathfrak{a}}]$. By Stokes formula and Lemma~\ref{dynamical_cutoff}, we deduce:
\begin{align*}
\mu^J_\mathfrak{a}\left(B_{\tilde{d}_{\mathfrak{a},n}}(\lambda_0,\varepsilon)\right)
&\leq d^{-(n-1)d_\Lambda}\int \theta_n \bigwedge_{\ell=1}^k (\hat{f}_k^{n-1})^*( \Omega + dd^cu_{n,1})^{j_1}\wedge[\Gamma_{\mathfrak{a}}] \\
 &\leq d^{-(n-1)d_\Lambda}\int \theta_n  (\hat{f}_k^{n-1})^*( \Omega) \wedge (\hat{f}_k^{n-1})^*( \Omega + dd^cu_{n,1})^{j_1-1}\wedge S+\\
&\quad   d^{-(n-1) d_\Lambda}\int u_{n,1}dd^c\theta_n \wedge (\hat{f}_k^{n-1})^*( \Omega + dd^cu_{n,1})^{j_1-1}\wedge S \\
&\leq \frac{Cn^2}{\varepsilon^2 d^{nd_\Lambda}} \int_{B_{d_n}\left(\left(\lambda_0, \mathfrak{a}\left(\lambda_0\right) \right)  ,2\varepsilon  \right)  }\left(\sum_{r=0}^{n-1} (\hat{f}_k^r)^*(\Omega)\right) \\
& \qquad \qquad \qquad\wedge (\hat{f}_k^{n-1})^*( \Omega + dd^cu_{n,1})^{j_1-1}\wedge S,
\end{align*}
where $C$ is a constant that depends only on the supremum of the $(u_{n,j})$ on the $2\varepsilon$-neighborhood of $\{\lambda_0\}\times (\P^q)^k$. We iterate the process with all the $j_1-1$ terms then for all the $\ell \leq k$. We deduce
\begin{align*}
\mu^J_\mathfrak{a}\left(B_{\tilde{d}_{\mathfrak{a},n}}(\lambda_0,\varepsilon)\right)
&\leq C\frac{n^{2d_\Lambda}}{d^{nd_\Lambda}} \int_{B_{d_n}\left( \left(\lambda_0, \mathfrak{a}\left(\lambda_0\right) \right), 2^{d_\Lambda} \varepsilon\right)}\sum_{0\leq m_1,\dots,m_{k}\leq n-1 } \bigwedge_{\ell=1}^k (\hat{f}_k^{m_\ell})^*( \Omega^{j_l})\wedge[\Gamma_{\mathfrak{a}}],
\end{align*}
where $C$ is a constant that depends only on $\varepsilon$ and the supremum of the $(u_{n,j})$ on the $2^{d_\Lambda}\varepsilon$-neighborhood of $\{\lambda_0\}\times (\P^q)^k$. Using Proposition~\ref{alaYomdin} (in a neighborhood of $K$) implies that there exists some constant $C'$ such that:
\begin{align*}
\mu^J_\mathfrak{a}\left(B_{\tilde{d}_{\mathfrak{a},n}}(\lambda_0,\varepsilon)\right)
&\leq C\cdot C'\cdot\frac{n^{2d_\Lambda}}{d^{nd_\Lambda}} e^{\gamma n},
\end{align*}
where $\gamma$ can be chose arbitrarily small by taking $\varepsilon$ small enough. This gives the wanted inequality in the theorem. Then, Proposition~\ref{classical_volume_argument} implies the inequality $h_\nu(\hat{f},\mathfrak{a},K)\geq d_\Lambda \log d$ so we have the equality by Theorem~\ref{alaGromov}. 
\end{proof}

Arguing similarly one proves that 
\begin{theorem}\label{entropy_current_bif} Pick an integer $1\leq j < d_\Lambda$. For any compact set $K$ such that  $T_\mathfrak{a}^j\wedge \Omega^{d_\Lambda-j} (K)>0$, we have
		\[ h_{T_\mathfrak{a}^j\wedge \Omega^{d_\Lambda-j}}(\hat{f},\mathfrak{a},K)\geq  j \log d .\]
	If furthermore $\mathrm{supp}(T_\mathfrak{a}^{j+1})\cap K=\varnothing$, then 	 
	\[ h_\mathfrak{a}(\hat{f},K)= j \log d .\]
\end{theorem}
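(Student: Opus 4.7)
The plan is to adapt the proof of Theorem~\ref{minoration} essentially verbatim for the lower bound, replacing $\mu_\mathfrak{a}$ by the trace measure $T_\mathfrak{a}^j\wedge\Omega^{d_\Lambda-j}$, and then to combine the resulting lower bound with Theorem~\ref{alaGromov} to obtain the equality in the second part. The only conceptual change is that this measure carries only $j$ factors of the bifurcation current, so the Stokes-Yomdin reduction has to be iterated $j$ times instead of $d_\Lambda$, producing a prefactor $d^{-jn}$ rather than $d^{-d_\Lambda n}$.

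For the first assertion, decompose as in Section~\ref{background}
\[T_\mathfrak{a}^j = \sum_{\substack{j_1,\ldots,j_k\geq 0\\ j_1+\cdots+j_k=j}}(\pi_\Lambda)_*\left(\bigwedge_{\ell=1}^k \pi_\ell^*\bigl(\widehat{T}^{j_\ell}\bigr)\wedge[\Gamma_\mathfrak{a}]\right)\]
and treat each term separately. For a fixed $k$-tuple $J=(j_1,\ldots,j_k)$ with $|J|=j$, pick $\lambda_0\in K$ and push the integral defining the measure of $B_{\tilde d_{\mathfrak{a},n}}(\lambda_0,\varepsilon)$ up to the product $X:=\Lambda\times(\P^q)^k$, endowed with the product Kähler form $\widetilde\Omega:=\pi_\Lambda^*\Omega+\sum_\ell\pi_\ell^*\omega_{\P^q}$ and the dynamics $\hat f_k$. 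Substituting $\widehat T=d^{-(n-1)}(\hat f^{n-1})^*\hat\omega+d^{-(n-1)}dd^c\hat u_n$ in each factor $\pi_\ell^*(\widehat T^{j_\ell})$, applying the dynamical cut-off Lemma~\ref{dynamical_cutoff} on $X$ and integrating by parts via Stokes exactly $j$ times (once per copy of $\widehat T$), exactly as in the proof of Theorem~\ref{minoration}, produces an estimate of the form
\[T_\mathfrak{a}^j\wedge\Omega^{d_\Lambda-j}\bigl(B_{\tilde d_{\mathfrak{a},n}}(\lambda_0,\varepsilon)\bigr)\leq \frac{Cn^{2j}}{d^{jn}}\int_{B_{d_n}\bigl((\lambda_0,\mathfrak{a}(\lambda_0)),2^j\varepsilon\bigr)}\sum_{\underline m}\bigwedge_{\ell=1}^{k}\bigl(\hat f_k^{m_\ell}\bigr)^*\bigl(\widetilde\Omega^{j_\ell}\bigr)\wedge(\pi_\Lambda^*\Omega)^{d_\Lambda-j}\wedge[\Gamma_\mathfrak{a}],\]
where $\underline m=(m_1,\ldots,m_k)$ ranges over $\{0,\ldots,n-1\}^k$ and $C$ is a constant depending only on a neighborhood of $K$ and on the $L^\infty$-bound of the $\hat u_n$.

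The key step is to bound the integrand uniformly by Proposition~\ref{alaYomdin}. Since $\pi_\Lambda^*\Omega\leq\widetilde\Omega$, each summand is dominated by $\bigwedge_{\ell=1}^{d_\Lambda}(\hat f_k^{m'_\ell})^*(\widetilde\Omega)$ in which $d_\Lambda-j$ of the exponents are set to $0$ (contributed by $(\pi_\Lambda^*\Omega)^{d_\Lambda-j}$) and the remaining $j$ indices lie in $\{0,\ldots,n-1\}$. Applying Proposition~\ref{alaYomdin} to the graph $\Gamma_\mathfrak{a}$, a smooth submanifold of complex dimension $d_\Lambda$ of uniformly bounded geometry above the compact set $K$, yields the bound $e^{\gamma n}$ per summand with $\gamma$ arbitrarily small (at the cost of shrinking $\varepsilon$). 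Summing the polynomially many (in $n$) contributions gives
\[T_\mathfrak{a}^j\wedge\Omega^{d_\Lambda-j}\bigl(B_{\tilde d_{\mathfrak{a},n}}(\lambda_0,\varepsilon)\bigr)\leq C'\,n^{2j+k}\,d^{-jn}\,e^{\gamma n}\]
uniformly in $\lambda_0\in K$, whence Proposition~\ref{classical_volume_argument} delivers $h_{T_\mathfrak{a}^j\wedge\Omega^{d_\Lambda-j}}(\hat f,\mathfrak{a},K)\geq j\log d$.

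For the second assertion, the hypothesis $\mathrm{supp}(T_\mathfrak{a}^{j+1})\cap K=\varnothing$ combined with Theorem~\ref{alaGromov} applied with $i=j+1$ yields $h_\mathfrak{a}(\hat f,K)\leq j\log d$. The reverse inequality follows from the first assertion together with the elementary inequality $h_\nu(\hat f,\mathfrak{a},K)\leq h_\mathfrak{a}(\hat f,K)$ valid for any positive Radon measure $\nu$ (separated sets being the same object, only the counting set shrinks). The main obstacle, just as in the proof of Theorem~\ref{minoration}, is maintaining uniformity of the Yomdin estimate with respect to the multi-index $\underline m$ and to $\lambda_0\in K$, which is precisely the content of our variant Proposition~\ref{alaYomdin}; the reduction of the codegree from $d_\Lambda$ to $j$ is a purely bookkeeping change and introduces no new difficulty.
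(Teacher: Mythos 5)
Your proposal is correct and follows essentially the same route as the paper: the paper's own proof is precisely the argument of Theorem~\ref{minoration} with only $j$ copies of $\widehat{T}$ replaced by $d^{-n}(\hat f^{n})^*\hat\omega+d^{-n}dd^c\widehat u_n$ (hence the $d^{-jn}$ decay), the extra $\omega_\Lambda$ factors absorbed as zero exponents in Proposition~\ref{alaYomdin}, and the conclusion drawn from Proposition~\ref{classical_volume_argument} and Theorem~\ref{alaGromov}. Your handling of the second assertion (Theorem~\ref{alaGromov} with $i=j+1$ plus the inequality $h_\nu\leq h_\mathfrak{a}$) matches the intended argument as well.
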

The proof is the same than above, one proves first that for any $\lambda_0 \in K\Subset \Lambda$, then 
\[ T_\mathfrak{a}^j\wedge \Omega^{d_\Lambda-j}\left(B_{\tilde{d}_{\mathfrak{a},n}}(\lambda_0,\varepsilon)\right)\leq e^{-n j\log d + n \gamma }.\]
For that, we proceed as above though we can only replace $j$-terms $\widehat{T}$ by $d^{-n}(\hat{f}^n)^*\hat{\omega}+d^{-n}dd^c\widehat{u}_n$. We conclude by  Proposition~\ref{classical_volume_argument} and Theorem~\ref{alaGromov}. We also have the following parametric Brin-Katok formula for the bifurcation measure, similar to the dynamical one (\cite{brinkatok}, the ideas of our proof are similar).  
\begin{theorem}\label{brinkatok}
	For $\mu_\mathfrak{a}$-a.e. $\lambda$, one has:
	\[\lim_{\varepsilon\to0} \liminf_{n\to \infty} \frac{-1}{n} \log \mu_\mathfrak{a}\left(B_{\tilde{d}_{\mathfrak{a},n}}(\lambda,\varepsilon)\right)= \lim_{\varepsilon\to 0} \limsup_{n\to \infty} \frac{-1}{n} \log \mu_\mathfrak{a}\left(B_{\tilde{d}_{\mathfrak{a},n}}(\lambda,\varepsilon)\right)= d_\Lambda\log d.\]	
\end{theorem}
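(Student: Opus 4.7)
The plan is to prove the two inequalities
\[\lim_{\varepsilon\to 0}\liminf_n \frac{-1}{n}\log \mu_\mathfrak{a}\bigl(B_{\tilde d_{\mathfrak{a},n}}(\lambda,\varepsilon)\bigr)\geq d_\Lambda\log d\]
for every $\lambda$ in a set of full $\mu_\mathfrak{a}$-measure, and the same inequality with $\limsup_n$ and the reverse direction $\leq$. Since $\liminf\leq\limsup$, this forces the asserted chain of equalities.

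The first (lower) inequality is immediate from Theorem~\ref{minoration}. For any compact $K\Subset\Lambda$ and any $\gamma>0$, there exist $\varepsilon_0>0$ and $n_0$ with $\mu_\mathfrak{a}(B_{\tilde d_{\mathfrak{a},n}}(\lambda,\varepsilon))\leq e^{-n(d_\Lambda\log d-\gamma)}$ for all $\lambda\in K$, $\varepsilon\leq\varepsilon_0$ and $n\geq n_0$. Hence $\liminf_n -n^{-1}\log\mu_\mathfrak{a}(B_{\tilde d_{\mathfrak{a},n}}(\lambda,\varepsilon))\geq d_\Lambda\log d-\gamma$ uniformly on $K$, and letting $\varepsilon\to 0$ and $\gamma\to 0$ along countable sequences, together with a $\sigma$-compact exhaustion $\Lambda=\bigcup_m K_m$, yields the bound for every $\lambda\in\Lambda$.

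The second (upper) inequality is the delicate part. The key input is that $h_\mathfrak{a}(\hat f,K)\leq d_\Lambda\log d$ for every compact $K$, a direct extension of Theorem~\ref{alaGromov} to $i=d_\Lambda+1$ which is legitimate because $T_\mathfrak{a}^{d_\Lambda+1}=0$ on the $d_\Lambda$-dimensional manifold $\Lambda$. Fix such a $K$, fix $\alpha>d_\Lambda\log d$, and choose $\gamma>0$ with $\alpha-d_\Lambda\log d-\gamma>0$. By the entropy bound, for all sufficiently small $\varepsilon>0$ and all large $n$ the maximum cardinality $N_n$ of a $(\tilde d_{\mathfrak{a},n},\varepsilon)$-separated subset of $K$ satisfies $N_n\leq e^{n(d_\Lambda\log d+\gamma/2)}$. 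Pick a maximal such set $E_n=\{\lambda_i\}$; then the $(\tilde d_{\mathfrak{a},n},\varepsilon)$-balls centered at the $\lambda_i$ cover $K$. Setting
\[A_n:=\bigl\{\lambda\in K\, ; \ \mu_\mathfrak{a}\bigl(B_{\tilde d_{\mathfrak{a},n}}(\lambda,2\varepsilon)\bigr)\leq e^{-n\alpha}\bigr\},\]
whenever $A_n\cap B_{\tilde d_{\mathfrak{a},n}}(\lambda_i,\varepsilon)\neq\varnothing$ any $\lambda$ in this intersection satisfies $B_{\tilde d_{\mathfrak{a},n}}(\lambda_i,\varepsilon)\subset B_{\tilde d_{\mathfrak{a},n}}(\lambda,2\varepsilon)$ by the triangle inequality, so $\mu_\mathfrak{a}(B_{\tilde d_{\mathfrak{a},n}}(\lambda_i,\varepsilon))\leq e^{-n\alpha}$. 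Summing over $i$ gives $\mu_\mathfrak{a}(A_n)\leq N_n\cdot e^{-n\alpha}\leq e^{-n(\alpha-d_\Lambda\log d-\gamma/2)}$, which is summable in $n$. Borel--Cantelli then yields that $\mu_\mathfrak{a}$-almost every $\lambda\in K$ lies in only finitely many $A_n$, whence $\limsup_n -n^{-1}\log\mu_\mathfrak{a}(B_{\tilde d_{\mathfrak{a},n}}(\lambda,2\varepsilon))\leq\alpha$.

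To conclude, take countable sequences $\alpha_k\downarrow d_\Lambda\log d$, $\varepsilon_k\downarrow 0$ and $K_m\nearrow\Lambda$; the union of the corresponding null sets is null, and on its complement $\limsup_n -n^{-1}\log\mu_\mathfrak{a}(B_{\tilde d_{\mathfrak{a},n}}(\lambda,2\varepsilon_k))\leq d_\Lambda\log d$ for every $k,m$. Since this quantity is monotone (increasing) as $\varepsilon\to 0$, its limit is at most $d_\Lambda\log d$, giving the upper bound. The main obstacle is securing the entropy estimate $h_\mathfrak{a}(\hat f,K)\leq d_\Lambda\log d$ (which makes the covering count in Borel--Cantelli tight enough to be summable) and then coherently patching the limits in $\varepsilon$, $\alpha$, $\gamma$ and $n$ through a single countable union of negligible sets.
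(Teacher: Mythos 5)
Your proposal is correct and follows essentially the same route as the paper: the lower bound is read off from Theorem~\ref{minoration}, and the upper bound combines the Gromov-type estimate (valid for every compact $K$ since $T_\mathfrak{a}^{d_\Lambda+1}=0$ for bidegree reasons, a point the paper uses implicitly when asserting $h_\mathfrak{a}(\hat f,\supp\mu_\mathfrak{a})=d_\Lambda\log d$) with a separated-set count and Borel--Cantelli. The only difference is cosmetic: the paper packs a $(\tilde d_{\mathfrak{a},n},\varepsilon)$-separated set directly inside the bad set $X_n(\mu_\mathfrak{a},\alpha)$, while you cover $K$ by balls centered at a maximal separated set and use the radius-doubling trick; both yield the same summable bound on the measure of the bad sets.
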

\begin{proof} Observe first that Theorem~\ref{minoration} above states that 
		\[\lim_{\varepsilon\to0} \liminf_{n\to \infty} \frac{-1}{n} \log \mu_\mathfrak{a}\left(B_{\tilde{d}_{\mathfrak{a},n}}(\lambda,\varepsilon)\right)\geq d_\Lambda\log d,\]
		for any $\lambda$ (not necessarily in the support of $\mu_\mathfrak{a}$). So all there is left to prove is that:
	\[\limsup_{n\to \infty} \frac{-1}{n} \log \mu_\mathfrak{a}\left(B_{\tilde{d}_{\mathfrak{a},n}}(\lambda,\varepsilon)\right)\leq d_\Lambda\log d	\]
	for $\mu_\mathfrak{a}$-a.e. $\lambda$. Take $\alpha > d_\Lambda\log d$ and let $\gamma \ll 1$ be such that $\alpha-\gamma > d_\Lambda \log d$. Since $h_\mathfrak{a}(\hat{f},\mathrm{supp}(\mu_\mathfrak{a}))= d_\Lambda \log d$, we know that for $\varepsilon$ small enough, there exists $n_0(\varepsilon)$ such that for all $n \geq n_0(\varepsilon)$, the cardinality of a $(n,\varepsilon)$-separated set is $\leq e^{n(d_\Lambda \log d +\gamma)}$.	
Consider the set:
\[X_{n}(\mu_\mathfrak{a},\alpha):=\left\{\lambda\in \Lambda\, ; \ \mu_\mathfrak{a}\left(B_{\tilde{d}_{\mathfrak{a},n}}(\lambda,\varepsilon)\right)\leq e^{-n\alpha}\right\}.\]
Take $\lambda_0$ in $X_{n}(\mu_\mathfrak{a},\alpha)$, then take inductively $\lambda_l \in  X_{n}(\mu_\mathfrak{a},\alpha)\backslash  \cup_{i\leq l-1} B_{\tilde{d}_{\mathfrak{a},n}}(\lambda_i,\varepsilon)$. This is possible as long as 
\[ \mu_\mathfrak{a}(X_{n}(\mu_\mathfrak{a},\alpha)) > l e^{-n\alpha}, \]
so, in particular, we can find a $(n,\varepsilon)$-separated set of cardinality $ \mu_\mathfrak{a}(X_{n}(\mu_\mathfrak{a},a))e^{n\alpha} $. By the bound of the entropy:
\[ \mu_\mathfrak{a}(X_{n}(\mu_\mathfrak{a},\alpha)) \leq e^{-n(\alpha-\gamma - d_\Lambda\log d)}.\]
Thus, as the rest of a convergent geometric series goes to $0$, we have:
\[ \mu_\mathfrak{a} \left(\limsup X_{n}(\mu_\mathfrak{a},\alpha)\right)=0.\]
As $\alpha$ is arbitrary, the result follows. 
\end{proof}

\begin{remark}\normalfont Using the same argument, we also have a  Brin-Katok formula for the measures $T_\mathfrak{a}^j\wedge \Omega^{d_\Lambda-j}$ on any compact sets $K$ such that $T_\mathfrak{a}^j\wedge \Omega^{d_\Lambda-j} (K)>0$ and $\mathrm{supp}(T_\mathfrak{a}^{j+1})\cap K=\varnothing$. 
\end{remark}

 \subsection{Bifurcation entropy of a holomorphic family of rational maps}\label{Section:bifurcationentropy}

Pick a holomorphic family $\hat{f}:\Lambda\times\P^1\to\Lambda\times\P^1$ of degree $d$ rational maps which is
critically marked, i.e. such that there exists $c_1,\ldots,c_{2d-2}:\Lambda\to\P^1$, holomorphic and such that for all $\lambda$, the points $c_1(\lambda),\ldots,c_{2d-2}(\lambda)$ describe all critical points of $f_{\lambda}$ counted with multiplicity.

\medskip

Let $\mathfrak{c}:=(c_1,\ldots,c_{2d-2})$. A theorem of DeMarco \cite{DeMarco1} states that the support of the closed positive $(1,1)$-current
$T_{\mathfrak{c}}$ coincides with the bifurcation locus in the classical sense of Ma\~n\'e-Sad-Sullivan \cite{MSS,Lyubich-unstable}. 
The bifurcation measure of the family $\hat{f}$ is the positive measure $\mu_\bif$ on $\Lambda$ defined as
\[\mu_{\bif}:=T_{\mathfrak{c}}^{\dim(\Lambda)}.\]
Note that, the formula \eqref{nointersection>q} reads as $T_{c_i}^2=0$ for any $1\leq i\leq 2d-2$, so that the $i$-th bifurcation current $T_\bif^i$ decomposes as
\[T_{\bif}^i=\sum_{ j_1,\ldots,j_i \atop \mathrm{distinct}}T_{c_{j_1}}\wedge\cdots\wedge T_{c_{j_i}}.\]
In particular, Theorem~\ref{Majoration} is just a reformulation of Theorem \ref{alaGromov} and Theorem \ref{Minoration} a reformulation of Theorem \ref{minoration}.

\medskip

Observe that we can easily generalize Theorems~\ref{Majoration} and ~\ref{minoration} to general families (i.e. non necessarily critically marked nor smooth). Indeed, pick a holomorphic family $\hat{f}:\Lambda\times\P^1\to\Lambda\times\P^1$ of degree $d$ rational maps. Take a finite branched cover $\pi:\tilde{\Lambda}\to\Lambda$ above the space of critically marked rational maps where $\tilde{\Lambda}$ is smooth, then the family $\tilde{f}$ defined by
\[\tilde{f}(\lambda,z)=(\lambda,f_{\pi(\lambda)}(z)), \ (\lambda,z)\in\tilde{\Lambda}\times \P^1,\]
is critically marked.

\begin{definition}
The \emph{bifurcation entropy} of the family $\hat{f}$ in a compact set $K\subset\Lambda$ is 
\[h_{\bif}(\hat{f},K):=h_{\mathfrak{c}}(\tilde{f},\pi^{-1}(K)).\]
\end{definition}

The above results apply then immediately.

\begin{question} \normalfont
	In \cite{InMuk}, the authors showed that the bifurcation locus of the anti-quadratic family: $(\lambda,z) \mapsto \bar{z}^2+\lambda$, the so-called Tricorn, contains undecorated real-analytic arcs at its boundary. In \cite{distribGV2}, we built a bifurcation measure which is supported by the closure of PCF parameters (so it does not see those arcs). It is easy to extend the notion of bifurcation entropy in that setting and it would be interesting to show that the bifurcations in the anti-quadratic family in the real-analytic arcs have no positive entropy and to show that the above bifurcation measure has maximal positive entropy.
\end{question}

\subsection{Application to point-wise dimension of the bifurcation measure} 
Let $f$ be a rational map in $\mathcal{M}_d$ (we identify $f$ with its class). We let $\mathcal{C}(f)$ denote its critical set. Assume $f$ is not a flexible Lattès map; for simplicity we also assume that $f$ has simple critical points and we let $F:\Lambda\times \P^1 \to \Lambda\times \P^1$ be a holomorphic family of rational maps that parametrizes a neighborhood of $f=f_0$ in $\mathcal{M}_d$. Up ot reducing $\Lambda$, we thus can follow holomorphically the critical points of $f$ in $\Lambda$.

 We make the following assumptions on $f$:
\begin{enumerate}
	\item  $f$  satisfies the Collet-Eckmann condition: $\forall c \in \mathcal{C}(f)$, $ \liminf |(f^n)'(f(c))|^{1/n}:=\exp(\underline{\chi}_c)>0$.  
\item $f$ satisfies the \emph{polynomial recurrence condition} of exponent $\beta$: There exists a constant $C>0$ such that $\forall c,c' \in \mathcal{C}(f)$, $\forall n \geq 1$, $\mathrm{dist}(f^n(c),c')\geq Cn^{-\beta}$. 
\end{enumerate}
 Misiurewicz maps provide many such examples. Following \cite{AGMV}, we see that the ball:
 \[\Omega_n:= \mathbb{B}\left(f, C\cdot \frac{1}{\max_c|(f^n)'(f(c))|} \right)\]
is sent into a $\varepsilon$-neighborhood of $(f^{n+1}(c_1), \dots, f^{n+1}(c_{2d-2}))$ by the map 
\[\lambda \to (f^{n+1}_\lambda(c_1(\lambda)), \dots, f^{n+1}_\lambda(c_{2d-2}(\lambda))),\] 
where $C$ is (another) constant that does not depend on $n$ and $(c_k(\lambda))$ denote the collection of marked critical points of $f_\lambda$. Let $ \limsup |(f^n)'(f(c))|^{1/n}:=\exp\overline{\chi}_c>1$. It follows that:
\[ \mathbb{B}\left(f, C\cdot  \frac{1}{\max_c\exp (n \overline{\chi}_c)} \right) \subset    B_{\tilde{d}_{\mathfrak{c},n}}(0,\varepsilon).\]
So taking the bifurcation measure $\mu_{\bif}$ of both sets and we have
 \[(\max_c \exp(\overline{\chi}_c))^{-n\overline{d}_{\mu_\bif}(f)}\lesssim\mu_{\bif}\left(\mathbb{B}\left(f, C\cdot  \frac{1}{ \max_c \exp (n\overline{\chi}_c)}\right) \right) \]
where $\overline{d}_{\mu_\bif}(f)$ denotes the (upper)-pointwise dimension of $\mu_\bif$ at $f$. From Theorem~\ref{minoration}, we have $   \mu_{\bif}(B_{\tilde{d}_{\mathfrak{c},n}}(0,\varepsilon))\lesssim e^{-n (2d-2)\log d
}$. So comparing the growth-rate, we deduce
\begin{equation}
\label{info_dim} (\max_c \exp \overline{\chi}_c)^{\overline{d}_{\mu_\bif}(f)}\geq d^{2d-2}
\end{equation}
which gives another proof of the second author's results \cite[Corollary 7.4]{Article1} in this more general context. 

\begin{remark} \normalfont
The work \cite{GaoShen} implies, for multimodal maps of the interval, that they are many examples satisfying the Collet-Eckmann assumption and the polynomial recurrence condition. By the above, it would be interesting to extend their result to rational maps. 
\end{remark}
Recall that, if $\mu_\Mand$ denotes the harmonic measure of the Mandelbrot set, it is the bifurcation measure of the family $f(\lambda,z)=(\lambda, z^2+\lambda)$ for which $0$ is the only marked critical point. A result of Graczyk and Swiatek \cite{graczykswiatek} states:
\begin{theorem}[\cite{graczykswiatek}]\label{graczykswiatek} For $\mu_\Mand$-almost every $\lambda$, the map $f_\lambda$ is Collet-Eckmann and
	\[ \underline{\chi}_0=\overline{\chi}_0=\lim_n  \frac{1}{n} \log \left| (f_\lambda^n)'(0) \right| =\log 2.\] 
\end{theorem}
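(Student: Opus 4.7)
The plan is to lift the problem to the uniformization of the complement of $\Mand$. Let $\Phi:\C\setminus\overline{\D}\to\C\setminus\Mand$ be the Douady--Hubbard conformal isomorphism; by Fatou's theorem the radial limits $\gamma(\theta):=\lim_{r\to 1^+}\Phi(re^{2\pi i \theta})$ exist for Lebesgue a.e. $\theta\in S^1$, and the harmonic measure is precisely the push-forward $\mu_\Mand=\gamma_\ast(d\theta/(2\pi))$. The statement thus translates into a statement valid for Lebesgue a.e.~$\theta$, and by standard arguments one may assume $\lambda:=\gamma(\theta)$ is neither hyperbolic nor parabolic, so that the critical value $\lambda=f_\lambda(0)$ belongs to the Julia set $J_\lambda$.

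The first key step is the compatibility of parameter and dynamical rays: for Lebesgue a.e.~$\theta$, the dynamical external ray at angle $\theta$ for $f_\lambda$, uniformized via the B\"ottcher coordinate $B_\lambda$ (characterized by $B_\lambda\circ f_\lambda=B_\lambda^2$ outside the filled Julia set), lands at the critical value $\lambda$. Since $f_\lambda$ acts on B\"ottcher coordinates by squaring, a direct chain-rule computation gives, specializing $z=B_\lambda^{-1}(re^{2\pi i\theta})$ for $r>1$,
\[\bigl|(f_\lambda^n)'(z)\bigr| \;=\; 2^n\,r^{2^n-1}\cdot\frac{\bigl|(B_\lambda^{-1})'(r^{2^n}e^{2\pi i\cdot 2^n\theta})\bigr|}{\bigl|(B_\lambda^{-1})'(re^{2\pi i\theta})\bigr|}.\]
Taking $r\to 1^+$ makes $z\to\lambda$ (the landing point of $R_\theta^\lambda$), so the theorem reduces to showing that the log-ratio of B\"ottcher radial derivatives above is $o(n)$ along a suitable sequence $r_n\to 1^+$.

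The second key input is Makarov's theorem: the harmonic measure on $\partial\Mand$ and on a.e.~$\partial J_\lambda$ has pointwise dimension $1$, and Makarov's iterated-logarithm law forces $\log|(B_\lambda^{-1})'(re^{2\pi i\theta})|=O\!\left(\sqrt{\log\tfrac{1}{r-1}\cdot \log\log\log\tfrac{1}{r-1}}\right)$ as $r\to 1^+$ for Lebesgue a.e.~$\theta$. Choosing $r_n=1+c\cdot 2^{-n}$, the depth $r_n^{2^n}-1$ stays bounded away from $0$, so both B\"ottcher derivatives in the ratio lie at controlled scales; the ergodicity of the doubling map on $(S^1,d\theta)$ prevents the ``bad'' $\theta$'s from concentrating along the orbit $(2^n\theta)_n$, and one obtains that the log-ratio is indeed $o(n)$ a.e., yielding both inequalities $\underline{\chi}_0\geq \log 2$ and $\overline{\chi}_0\leq \log 2$.

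The main obstacle is to simultaneously control the B\"ottcher radial derivative at the angle $\theta$ \emph{and} at its iterated images $2^n\theta$, which are dynamically correlated under doubling; this is where the Makarov-type bound must be promoted to a statement that is robust along Borel--Cantelli subsequences, and this in turn demands refined Beurling--Carleson type estimates on $\Phi^{-1}$ and on $B_\lambda^{-1}$. The crucial geometric miracle used by Graczyk and Swiatek is that the \emph{same} angle $\theta$ parametrizes the asymptotics of both the parameter ray and the dynamical ray at their common landing point, so that sharp estimates in the parameter plane (coming from harmonic measure on $\Mand$) transfer to the dynamical plane and yield the exact exponent $\log 2$, rather than merely an inequality.
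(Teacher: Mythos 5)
You should first note that the paper does not prove this statement at all: it is quoted verbatim from Graczyk--\'Swi\c{a}tek \cite{graczykswiatek}, and the authors' only comment is that the original proof ``relies crucially on fine properties of external rays'' and on Makarov's theorem, and that a genuinely different proof (via bifurcation entropy) would be desirable. Your sketch therefore cannot be checked against an argument in the paper; it is an outline of the cited external proof, and in outline it does point at the right ingredients (parameter rays, B\"ottcher coordinates, Makarov's law of the iterated logarithm). Incidentally, since $(f_\lambda^n)'(0)=0$, the quantity really estimated is $(f_\lambda^n)'(\lambda)$, the derivative at the critical value, which is how you (correctly) set things up.

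As a proof, however, the sketch has genuine gaps exactly where the real difficulty lies. First, the Makarov/LIL bound you invoke for $\log|(B_\lambda^{-1})'|$ is an ``almost every angle, for a fixed map'' statement, whereas you need it at the single angle $\theta$ for the parameter $\lambda=\gamma(\theta)$ that is coupled to that very angle; the exceptional null set of angles depends on $\lambda$, and the appeal to ``ergodicity of the doubling map'' does not resolve this, because the doubling map acts only in the dynamical angle variable and the harmonic measure $\mu_\Mand$ is not invariant under any relevant dynamics in the parameter plane. Transferring Makarov-type estimates from the parameter plane (where a.e.\ is with respect to $\mu_\Mand$) to the dynamical B\"ottcher coordinate of $f_{\gamma(\theta)}$ at the coupled angle is precisely the content of Graczyk--\'Swi\c{a}tek's analysis, not a routine Borel--Cantelli refinement. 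Second, once you couple $r_n=1+c\,2^{-n}$ with $n$, your chain-rule identity only controls $|(f_\lambda^n)'(z_n)|$ at a point $z_n$ on the ray at depth $\sim 2^{-n}$, not $|(f_\lambda^n)'(\lambda)|$ at the landing point; for fixed $n$ one may let $r\to1^+$ by continuity, but then the denominator $|(B_\lambda^{-1})'(re^{2\pi i\theta})|$ is evaluated in that limit and your LIL bound at scale $r_n$ no longer applies. You need a bounded-distortion or expansion estimate along the tail of the ray between $z_n$ and $\lambda$ showing $|(f_\lambda^n)'(z_n)|=e^{o(n)}|(f_\lambda^n)'(\lambda)|$, and nothing in the sketch supplies it; this is again where the ``fine properties of external rays'' (and Przytycki-type estimates) enter. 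Third, the assertion that for a.e.\ $\theta$ the dynamical ray at angle $\theta$ lands at the critical value of $f_{\gamma(\theta)}$ is itself a nontrivial input which you assume rather than prove. So the proposal is a reasonable roadmap to the cited theorem, but the three steps above are the theorem's actual substance and are missing.
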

As their proof relies crucially on fine properties of external rays, and on the fact that the parameter space is $\C$, it would be interesting to give a different proof, using the notion of bifurcation entropy which might also work in higher degree.

\section{Measure-theoretic entropy in several complex variables}\label{higher}
\subsection{Entropy of the Green measure of Hénon maps}
The purpose of this section is to give an alternate proof of the following result of Bedford and Smillie \cite[p-411, Theorem 4.4]{bedfordsmillie3}.
\begin{theorem}
	$h_\mu(f) = \log d$.
\end{theorem}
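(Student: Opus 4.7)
The strategy follows the one used for the bifurcation measure in Theorem~\ref{minoration}. The upper bound $h_\mu(f)\leq \log d$ is immediate from the variational principle combined with the classical computation $h_{\mathrm{top}}(f)=\log d$ (Gromov, Smillie), so the entire content is the reverse inequality $h_\mu(f)\geq \log d$. As in Theorem~\ref{minoration}, I will deduce this from the dynamical ball estimate
\[\mu\bigl(B_{d_n}(x,\varepsilon)\bigr)\leq e^{-n\log d+n\gamma},\qquad x\in \supp(\mu),\ n\geq n_0(\varepsilon),\ 0<\varepsilon\ll 1,\]
via a verbatim analog of Proposition~\ref{classical_volume_argument} applied to $\mu$ and the forward dynamics of $f$. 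The hypothesis $f(Y)\subset Y\Subset X$ of Lemma~\ref{dynamical_cutoff} is not literally satisfied on $\C^2$, but since only Bowen balls of points in $\supp(\mu)$ enter the argument and their forward orbits stay uniformly close to this compact invariant set, the standard H\'enon filtration supplies a suitable $Y$.

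To prove the estimate, I write $\mu=T^+\wedge T^-$ and exploit the different status of the two Green currents. On any compact set one has
\[T^+=d^{-(n-1)}(f^{n-1})^*\omega+d^{-(n-1)}dd^c v^+_{n-1},\qquad T^-=dd^c u^-,\]
with $(v^+_{n-1})$ a locally uniformly bounded family of continuous functions and $u^-$ a continuous potential of $T^-$ (Bedford--Taylor). Inserting the cut-off $\theta_n$ of Lemma~\ref{dynamical_cutoff}, one decomposes
\[\mu(B_{d_n}(x,\varepsilon))\leq d^{-(n-1)}\!\int\theta_n (f^{n-1})^*\omega\wedge T^- + d^{-(n-1)}\!\int\theta_n\, dd^c v^+_{n-1}\wedge T^-.\]
Two successive Stokes integrations by parts --- first moving $dd^c v^+_{n-1}$ onto $\theta_n$ (using closedness of $T^-$), then moving $dd^c u^-$ onto a slightly enlarged cut-off (using closedness of $(f^i)^*\omega$) --- reduce both terms, up to an explicit factor $\|u^-\|_\infty\|v^+_{n-1}\|_\infty(Cn^2/\varepsilon^2)^2$, to sums over $0\leq i,j\leq n-1$ of integrals of the form
\[\int_{B_{d_n}(x, C'\varepsilon)}(f^i)^*\omega\wedge(f^j)^*\omega.\]
Proposition~\ref{alaYomdin} (with $k=2$) bounds each of these by $e^{\gamma' n}$ for $\varepsilon$ small enough, so collecting factors yields
\[\mu(B_{d_n}(x,\varepsilon))\leq P(n,\varepsilon^{-1})\cdot d^{-(n-1)}\cdot e^{\gamma' n}\leq e^{-n\log d + n\gamma}\]
for some polynomial $P$, any $\gamma>\gamma'$, and $n$ large, as required.

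The main obstacle is that Proposition~\ref{alaYomdin} is designed for forward iteration of a single map, whereas $T^-$ is intrinsically the Green current of $f^{-1}$; there is no symmetric Yomdin bound available on the $T^-$ side. The key trick --- exactly parallel to the bifurcation argument --- is to use the continuity of the local potential of $T^-$ to absorb $T^-$ into a bounded $dd^c$ and transfer the derivatives, via Stokes, onto the controlled cut-off $\theta_n$. What remains are wedges of forward pullbacks of $\omega$ to which Proposition~\ref{alaYomdin} applies directly, and the single $d^{-(n-1)}$ factor coming from the $T^+$-normalization produces exactly the announced entropy $\log d$. A secondary benefit of this approach, as advertised in the introduction, is that Misiurewicz's proof of the variational principle is never invoked: the lower bound comes straight from the volume argument of Proposition~\ref{classical_volume_argument}.
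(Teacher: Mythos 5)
Your argument is correct in substance, but it takes a genuinely different route from the paper at the key step. The paper symmetrizes the problem: it estimates \emph{two-sided} Bowen balls, lifts to $\P^2\times\P^2$ where $F=(f,f^{-1})$ acts, writes $\mu=(\Pi_1)_*\left(\Pi_1^*(T^+)\wedge\Pi_2^*(T^-)\wedge[\Delta]\right)$, and replaces \emph{both} Green currents by $d^{-(n-1)}(f^{\pm(n-1)})^*\omega-dd^cu_n^{\pm}$; two rounds of Lemma~\ref{dynamical_cutoff} and Stokes then produce a factor $d^{-2(n-1)}$ together with mixed terms $(F^k)^*\Omega\wedge(F^l)^*\Omega$ integrated over $\Delta$ intersected with a Bowen ball of $F$, which is exactly what Proposition~\ref{alaYomdin} handles with $M=\Delta$. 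You instead keep the forward Bowen ball of $f$, extract $d^{-(n-1)}$ from $T^+$ only, and dispose of $T^-$ by writing it as $dd^cu^-$ with $u^-=G^-$ continuous, hence bounded on a fixed compact neighborhood of $\supp(\mu)$ in $\C^2$ (this continuity is a standard property of H\'enon Green functions rather than Bedford--Taylor, a citation quibble only); a second Stokes with an enlarged cut-off reduces everything to integrals $\int_{B_{d_n}(x,C'\varepsilon)}(f^i)^*\omega\wedge(f^j)^*\omega$, i.e.\ Proposition~\ref{alaYomdin} with $k=2$ and $M$ an open piece of the ambient space. Your route buys simplicity: no product space, no two-sided balls, no bookkeeping relating two-sided separated sets to forward entropy, and it uses nothing about $T^-$ beyond local boundedness of its potential --- the same device the paper itself uses for the trace measures $\mu_l=T_f^l\wedge\omega^{k-l}$. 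The paper's route buys the stronger symmetric estimate $\mu(B_n(x,\varepsilon))\lesssim d^{-2n}e^{\gamma n}$ on two-sided balls, the natural input for the two-sided Brin--Katok statement for the automorphism. Both proofs conclude with the same volume argument (Proposition~\ref{classical_volume_argument}) and the same implicit passage from Katok-type counting to $h_\mu$.

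One small correction: the claim that the standard H\'enon filtration supplies a suitable $Y\Subset\C^2$ is not literally true --- every neighborhood of $\supp(\mu)$ contains escaping points, so no relatively compact subset of $\C^2$ containing such a neighborhood can have a forward-invariant $2\varepsilon_0$-neighborhood. The fix is either to observe (as the proof of Proposition~\ref{alaYomdin} implicitly does) that only orbits $\varepsilon$-shadowing orbits inside the invariant compact set $\supp(\mu)$ ever occur, so one may take $Y=\supp(\mu)$ and the invariance of the enlarged neighborhood is never used, or to take the paper's own $Y=U^+$, the complement in $\P^2$ of a small neighborhood of the indeterminacy point $I^+$, which is forward invariant, relatively compact in $\P^2\setminus\{I^+\}$, and on which $f$ has bounded derivatives; your argument needs only this forward half, not the product $U^+\times U^-$.
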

Recall that $f$ is a Hénon map of $\C^2$, that is a polynomial automorphism of degree $d>1$ of $\C^2$. The measure $\mu$ is defined as $\mu:= T^+\wedge T^-$ where $T^\pm$ is the Green current of $f^\pm$:
$T^\pm:= \lim_{n\to \infty} d^{-n} (f^{\pm n})^*(\omega)$ ($\omega$ is the Fubini-Study form on $\P^2$). As $I^+$, the indeterminacy point of $f$, is a super-attracting fixed point of $f^{-1}$, one can take an open set $U^+$ which is the complementary set of a neighborhood of $I^+$ such that $f(U^+)\subset U^+$. One constructs similarly $U^-$ and we can choose them so that the support of $\mu$ is relatively compact in $U:= U^+ \cap U^-$. We can write $  d^{-n+1} (f^{\pm (n-1)})^*(\omega)= T^\pm + dd^c u^\pm_n$ with the estimate  $  \|u^\pm_n\|_{\infty, U^\pm} \leq C d^{-n}$ (the constant $C$ depends on $U^\pm$ but not on $n$). As Bedford and Smillie did, observe that by Gromov's result and the variational principle,  $h_\mu(f) \leq \log d$ so all there is to prove is the reverse inequality.

 For that, they applied Yomdin's estimate on the dynamical ball $B_n(x,\varepsilon)$ for the measure $ d^{-n} (f^n)^*(\omega) \wedge \omega$ then 
using Misiurewicz' proof of the variational principle, they obtain the wanted lower bound of the entropy.\\

As briefly explained in the introduction, in here, the idea is to apply Yomdin's estimate on the dynamical ball $B_n(x,\varepsilon)$ directly for the measure $\mu$. This allows us to get rid of Misiurewicz' proof of the variational principle; in exchange, we need a precise control on the convergence towards the Green currents (we did not need in \cite{ThelinVigny1} in the general case of meromorphic maps). For that, we lift the different objects on the product space $\P^2 \times \P^2$, where the map $(f,f^{-1})$ acts, using the diagonal (those are ideas Dinh used to prove the exponential decay of correlations for Hénon maps \cite{dinhdecay}).

Let  $F$ be the birational map $F:=(f,f^{-1}):\P^2 \times \P^2  \to \P^2 \times \P^2   $, $\Pi_i$ be the projection to the $i$-th coordinate and $\Delta$ be the diagonal in  $\P^2 \times \P^2$. Write $\Omega= \Pi_1^*(\omega)+\Pi_2^*(\omega) $. Then $\mu= (\Pi_1)_*( \Pi_1^*(T^+)\wedge \Pi_2^*(T^-) \wedge [\Delta])$. If 
\[B_n(x,\varepsilon):= \{x'\in \P^2, \ \forall |k|\leq n-1, \   d(f^k(x'),f^k(x))< \varepsilon \} \]
is the two-sided dynamical ball ($d$ is the metric in $\P^2$ induced by the Fubini-Study form), then $B_n(x,\varepsilon) = \Pi_1( B^2_n((x,x), \varepsilon)\cap \Delta )$ where $B^2_n((x,x), \varepsilon)$ denotes the Bowen ball on $\P^2\times \P^2$ for the map $F$ with respect to the distance $\tilde{d}$ on $\P^2\times \P^2$ defined by 
$\tilde{d}((x,y),(x',y'))=\max (d(x,x'), (y,y'))$. In  other words:
\[ B^2_n((x,y), \varepsilon):= \{(x',y')\in \P^2, \ \forall k\leq n-1,\ \tilde{d}(F^k(x',y'), F^k(x,y))< \varepsilon\} .\]

We want an upper bound of $\mu(B_n(x,\delta))$ for $x \in \mathrm{supp}(\mu)$. Observe that the point $(x,x)\in U^+\times U^-$ and $F(U^+\times U^-)\subset U^+\times U^-$. Reducing $\varepsilon$, we can assume that $B^2_n((x,x), 4\varepsilon) \subset U^+\times U^-$. Using the cut-off function  $\theta_n$ of Lemma~\ref{dynamical_cutoff}, the estimates on the convergence toward the Green currents and Stokes formula, we have:
\begin{align*}
\mu(B_n(x,\varepsilon)) &= \int_{B^2_n((x,x), \varepsilon)}  \Pi_1^*(T^+)\wedge \Pi_2^*(T^-) \wedge [\Delta] \\
                  & =  \int_{B^2_n((x,x), \varepsilon)}   \Pi_1^*(d^{-n+1} (f^{n-1})^*(\omega)- dd^c u^+_n)\wedge\\
                  &\qquad \qquad \qquad \qquad\qquad   \Pi_2^*(d^{-n+1} (f^{-n+1})^*(\omega)- dd^c u^-_n)\wedge [\Delta] \\
                  &\leq \int \theta_n  \Pi_1^*(d^{-n+1} (f^{n-1})^*(\omega)- dd^c u^+_n)\wedge  \Pi_2^*(d^{-n+1} (f^{-n+1})^*(\omega)- dd^c u^-_n)\wedge [\Delta] \\
                  &\leq   \int \theta_n  \Pi_1^*(d^{-n+1}(f^{n-1})^*(\omega)) \wedge  \Pi_2^*(d^{-n+1} (f^{-n+1})^*(\omega)- dd^c u^-_n)\wedge [\Delta] \\ 
                  &\  \  + \int \Pi_1^*(u^+_n) dd^c \theta_n  \wedge  \Pi_2^*(d^{-n+1} (f^{-n+1})^*(\omega)- dd^c u^-_n)\wedge [\Delta] \\
                  &\leq  C\frac{n^2}{\varepsilon^2 d^n} \int_{B^2_n((x,x), 2 \varepsilon)} \sum_{k\leq n-1} (F^k)^*(\Omega) \wedge \Pi_2^*(d^{-n+1} (f^{-n+1})^*(\omega)- dd^c u^-_n)\wedge [\Delta], 
\end{align*}
where $C$ is a constant that does not depend on $n$. We proceed similarly for the term in $dd^c u^-_n$ using a dynamical cut-off function for $B^2_n((x,x), 2 \varepsilon)$ and we get:
\begin{align*}
\mu(B_n(x,\delta)) &\leq  C\frac{n^4}{\varepsilon^2 d^{2n}} \int_{B^2_n((x,x), 4 \varepsilon)} \sum_{k,l \leq n-1} (F^k)^*(\Omega) \wedge (F^{l})^*(\Omega)\wedge [\Delta], 
\end{align*}
where $C$ is again a constant that does not depend on $n$. We apply Proposition~\ref{alaYomdin} to the above term for $F$ and $\Delta$, so we have the bound:
\begin{align*}
\mu(B_n(x,\varepsilon)) &\leq  Ce^{\gamma n}, 
\end{align*}
for $\gamma$ arbitrarily small, reducing $\varepsilon$ if necessary. Arguing as in Proposition~\ref{classical_volume_argument} gives back Bedford-Smillie's theorem.

\begin{remark}  \normalfont
	  Proceeding as in the proof of Theorem~\ref{brinkatok} allows us to prove Brin-Katok formula for $\mu$ directly. In particular we do not need to use the ergodicity of $\mu$. It would be interesting and a priori difficult to get a speed in the convergence in Brin-Katok formula (for generic $x$). This raises the question of proving a quantitative Algebraic Lemma for holomorphic maps. \\
	The above proof also works in the case of holomorphic map or the so-called regular birational maps (\cite{dinhsibonyregular}).
\end{remark}

\subsection{Entropy for the trace measure of the Green currents}
   Let $f:\P^k \to \P^k$ be a holomorphic map of algebraic degree $d\geq 2$. Recall that its Green current $T_f$ is a positive closed current of bidegree $(1,1)$ and mass $1$ defined by:
   \[ T_f:= \lim_{n\to \infty} d^{-n} (f^n)^* \omega  \]   
   where $\omega$ is the Fubini-Study form on $\P^k$. We can write $T_f= d^{-n+1} (f^{n-1})^* \omega +dd^c u_n$ with $\|u_n\|_\infty=O(d^{-n})$. We can thus define the self-intersection of the current $T_f$ for $l\leq k$: $T_f^l:= T_f \wedge \dots \wedge T_f$ $l$ times. Its trace measure $\mu_l$ is then the well-defined probability measure:
   \[ \mu_l := T_f^l\wedge \omega^{k-l}. \]
When $k=l$, $\mu_k$ is known to be the (unique) ergodic measure of maximal entropy $k\log d$ \cite{briendduval} so we shall assume that $l<k$. Our aim is to show similar results for $\mu_l$ in $\mathrm{supp} (T_l^k)$. Since $\mu_l$ is not invariant, we need to give a meaning to its entropy in term of the asymptotic cardinality of $(n,\varepsilon)$-separated sets. The Bowen distance $d_n$ we consider here is the classical distance on $\P^k$: $d_n(x,y):= \max_{j\leq n-1} d(f^j(x),f^j(y))$.
\begin{definition}
Let $K$ be a compact set in $\P^k$ and let $\nu$ be a probability measure on $\P^k$.
For $\kappa>0$, we consider:
\[ h_\nu(f,K,\kappa):= \inf_{\Lambda \subset K, \ \nu(K\backslash\Lambda) < \kappa } \sup_{\varepsilon>0} \limsup_{n \to \infty } \frac{1}{n} \log \max \left\{ \mathrm{card}(E), \ E\subset \Lambda \ \mathrm{is} \  (n,\varepsilon)-\mathrm{separated} \right\}.
\]
 We define the \emph{entropy $h_\nu(f,K)$ of $f$ on $K$} as the quantity 
\[ h_\nu(f,K):=\sup_{\kappa\to 0} h_\nu(f,K,\kappa).
 \]
 We also define the \emph{entropy $h_\nu(f)$ of $f$} as $h_\nu(f,\P^k)$.
\end{definition}    
\begin{theorem}For any $\gamma>0$, there exists an integer $n_0$ and  $\varepsilon>0$  such that for any $x\in \P^k$ and any $n\geq n_0$
	\[\mu_l(B_n(x,\varepsilon)) \leq e^{- n l\log d + n\gamma } .\]
	 In particular, for any compact set $K\subset \mathrm{supp}(T^l_f)\backslash  \mathrm{supp}(T^{l+1}_f)$ such that $\mu_l(K)>0$, we have $h_{\mu_l}(f,K)=l\log d$.	 
\end{theorem}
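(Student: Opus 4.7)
The plan mirrors the proof of Theorem~\ref{minoration} in the bifurcation setting and the Hénon argument from the previous subsection, but is simpler since we are dealing with a single endomorphism $f:\P^k\to\P^k$ acting on a fixed manifold. The argument splits into a pointwise mass estimate on Bowen balls (the main inequality stated in the theorem) and a subsequent entropy computation on $K$.

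\textbf{Mass of a Bowen ball.} Using $T_f = d^{-n+1}(f^{n-1})^*\omega + dd^c u_n$ with $\|u_n\|_\infty = O(d^{-n})$, I would write
\[\mu_l = \bigl(d^{-n+1}(f^{n-1})^*\omega + dd^c u_n\bigr)^l \wedge \omega^{k-l},\]
multiply by the dynamical cut-off $\theta_n$ for $B_n(x,\varepsilon)$ from Lemma~\ref{dynamical_cutoff}, and iterate Stokes' formula $l$ times to push each $dd^c u_n$ onto $dd^c\theta_n$, exactly as in the Hénon proof above. Each iteration costs a factor $Cn^2/\varepsilon^2$ (from Lemma~\ref{dynamical_cutoff}) and enlarges the Bowen ball by a factor two, yielding
\[\mu_l(B_n(x,\varepsilon)) \leq C\,\frac{n^{2l}}{\varepsilon^{2l}\, d^{nl}} \int_{B_n(x, 2^l\varepsilon)} \sum_{0\leq k_1,\ldots,k_l \leq n-1} \bigwedge_{j=1}^l (f^{k_j})^*\omega \wedge \omega^{k-l}.\]
To apply Proposition~\ref{alaYomdin}, I would replace the $\omega^{k-l}$ factor by a superposition of currents of integration over linear subspaces $L\subset\P^k$ of complex dimension $l$, using exactly the trick employed inside the proof of that Proposition to handle vanishing indices. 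The resulting integrals $\int_{B_n(x,2^l\varepsilon)\cap L} \bigwedge_{j=1}^l (f^{k_j})^*\omega$ fall directly under Proposition~\ref{alaYomdin} since such $l$-planes form a family with uniformly bounded geometry, and are bounded by $e^{\gamma n/2}$ for $\varepsilon$ small and $n$ large. Summing the $\leq n^l$ contributions and absorbing the polynomial prefactors into the exponential gives the desired estimate $\mu_l(B_n(x,\varepsilon)) \leq e^{-nl\log d + n\gamma}$.

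\textbf{From the mass estimate to the entropy.} The lower bound $h_{\mu_l}(f,K) \geq l\log d$ for any compact $K$ with $\mu_l(K)>0$ follows from the pointwise estimate above by a verbatim repetition of Proposition~\ref{classical_volume_argument}: only the ambient space and the Bowen distance change. For the reverse inequality on $K\subset \supp(T_f^l)\setminus\supp(T_f^{l+1})$, I would adapt the Gromov-type argument of Theorem~\ref{alaGromov} to the orbit graph $\Gamma_n = \{(x,f(x),\ldots,f^{n-1}(x)):x\in\P^k\}\subset (\P^k)^n$. Lelong's inequality gives $\mathrm{Vol}(\Pi_0^{-1}(K_\varepsilon)\cap\Gamma_n) \gtrsim N\varepsilon^{2k}$ for any $(n,\varepsilon)$-separated set of cardinality $N$, and the upper bound is obtained by expanding $\Omega^k|_{\Gamma_n}$ binomially, performing Stokes, and using $T_f^{l+1}\equiv 0$ near $K$ to kill every term involving at least $l+1$ pullbacks of $\omega$. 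This produces $h(f,K) \leq l\log d$, whence $h_{\mu_l}(f,K) = l\log d$.

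The main obstacle is the mass estimate: one must keep careful track of the cross terms $d\theta_n \wedge d^c\theta_n$ appearing when iterating Stokes (controlled by Lemma~\ref{dynamical_cutoff}) and verify that the linear-subspace decomposition of $\omega^{k-l}$ produces a family of submanifolds with uniformly bounded geometry, so that Proposition~\ref{alaYomdin} applies with constants uniform in $x$. Both points are already essentially treated in the bifurcation proof, and the remainder of the argument is a direct transcription.
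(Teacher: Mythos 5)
Your proposal is correct and follows essentially the same route as the paper: the paper's (omitted) argument for the mass estimate is exactly your replacement of $T_f$ by $d^{-n+1}(f^{n-1})^*\omega+dd^cu_n$, iterated Stokes against the dynamical cut-offs of Lemma~\ref{dynamical_cutoff}, and an application of Proposition~\ref{alaYomdin} (which already handles the $\omega^{k-l}$ factor through vanishing indices, so your explicit lamination step is harmless but redundant), and your lower bound for $h_{\mu_l}(f,K)$ is the same volume argument as Proposition~\ref{classical_volume_argument}. The only divergence is the upper bound on $K\subset\supp(T_f^l)\setminus\supp(T_f^{l+1})$: the paper simply cites the result of \cite{dethelin_mathann} (see also \cite{dinh_JGA}) that the topological entropy there is at most $l\log d$, whereas you re-derive it by adapting the graph-volume argument of Theorem~\ref{alaGromov}, which is the same underlying technique.
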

The proof of the first part is similar to the one of Theorem~\ref{minoration} above so we omit it (replace $T_f$ by $d^{-n+1} (f^{n-1})^* \omega +dd^c u_n$, apply Stokes to the dynamical cut-off functions and conclude with Proposition~\ref{alaYomdin}). The second part then follows directly from the result of the first author (\cite{dethelin_mathann}, see also \cite{dinh_JGA}) who showed that the topological entropy is $\leq l\log d$ outside the support of $T^{l+1}_f$. \\

Finally, we can also consider the map $f\to h_{\mu_l}(f)$. 
\begin{proposition}
Let $f$ be an endomorphism of $\P^k$ of algebraic degree $d\geq2$. Then $h_{\mu_l}(f)\geq l \log d$.	
Furthermore, for any $\alpha \in [l \log d, k \log d]$, there exists an endomorphism $f$ such that $h_{\mu_l}(f)= \alpha$. 	
\end{proposition}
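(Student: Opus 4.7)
The plan is to split the proposition into the universal lower bound $h_{\mu_l}(f)\geq l\log d$ and the realization of every value $\alpha\in[l\log d,k\log d]$ by some endomorphism of $\P^k$ of algebraic degree $d$.

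For the lower bound, I would apply the ball-measure estimate just proved: for every $\gamma>0$ there exist $\varepsilon>0$ and $n_0$ such that $\mu_l(B_n(x,\varepsilon))\leq e^{-n l\log d+n\gamma}$ uniformly in $x\in\P^k$ and $n\geq n_0$. Running the greedy selection argument of Proposition~\ref{classical_volume_argument} inside any Borel subset $\Lambda\subset\P^k$ with $\mu_l(\Lambda)>0$, one recursively picks $\lambda_j\in\Lambda\setminus\bigcup_{i<j}B_n(\lambda_i,\varepsilon)$ as long as $\sum_{i<j}\mu_l(B_n(\lambda_i,\varepsilon))<\mu_l(\Lambda)$; this produces an $(n,\varepsilon)$-separated subset of $\Lambda$ with cardinality at least $\mu_l(\Lambda)\cdot e^{n(l\log d-\gamma)}$. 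Taking $\log/n$, then the $\sup_{\varepsilon}$ and the $\inf_\Lambda$ in the definition of $h_{\mu_l}(f,\P^k,\kappa)$, yields $h_{\mu_l}(f,\P^k,\kappa)\geq l\log d-\gamma$ for every $\kappa>0$; letting $\gamma\to 0$ gives the bound. The matching upper bound $h_{\mu_l}(f)\leq k\log d$ is immediate from Gromov's bound on topological entropy, which dominates every separated-set entropy.

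For the existence of examples attaining a given $\alpha$, I would handle the endpoints first. For $\alpha=k\log d$: take $f$ any Lattès endomorphism of $\P^k$ (a map induced by multiplication by $d$ on a torus cover). Then $f^*\omega=d\omega$ forces $T_f=\omega$, whence $\mu_l=T_f^l\wedge\omega^{k-l}=\omega^k=\mu_k$ is the Briend--Duval measure of maximal entropy, so $h_{\mu_l}(f)=k\log d$. For $\alpha=l\log d$: the monomial map $[z_0:\cdots:z_k]\mapsto[z_0^d:\cdots:z_k^d]$ has a strict tropical support filtration $\mathrm{supp}(T_f^{l+1})\subsetneq\mathrm{supp}(T_f^l)$ with $\mu_l$ concentrated on the complement, and the upper-bound part of the preceding theorem gives $h_{\mu_l}(f)=l\log d$. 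For the intermediate discrete values $\alpha=m\log d$, $m\in\{l+1,\ldots,k-1\}$, I would use a hybrid map of the form
\[ f([z]):=[L_0(z_0,\ldots,z_m):\cdots:L_m(z_0,\ldots,z_m):z_{m+1}^d:\cdots:z_k^d], \]
where $[L_0:\cdots:L_m]$ is a Lattès map on the invariant subspace $\P^m=\{z_{m+1}=\cdots=z_k=0\}$; the restriction $T_f|_{\P^m}=\omega_{\P^m}$ forces the $\mu_l$-mass carried by $\P^m$ to realize entropy $m\log d$, while the complementary ``monomial'' locus contributes only $l\log d$. Since $\mu_l(\P^m)>0$, the $\sup_{\kappa\to 0}$ in the definition of $h_{\mu_l}(f,\P^k)$ eventually picks up the higher-entropy stratum, giving $h_{\mu_l}(f)=m\log d$.

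The main obstacle is covering every real $\alpha\in[l\log d,k\log d]$, not just the discrete lattice of integer multiples of $\log d$. The natural strategy is to embed the hybrid construction above in a continuous one-parameter family $(f_t)_{t\in[0,1]}$ interpolating between the monomial and Lattès regimes and to apply an intermediate-value argument to $t\mapsto h_{\mu_l}(f_t)$. However, $h_{\mu_l}$ is in general only semicontinuous, and its variation is governed by how the measure $\mu_l$ redistributes itself among the strata $\mathrm{supp}(T_{f_t}^i)\setminus\mathrm{supp}(T_{f_t}^{i+1})$ as $t$ varies. Producing a family for which this redistribution is continuous enough to attain every intermediate value, and controlling the mass fractions on the high- and low-entropy strata so that the $\sup_{\kappa\to 0}$ in the entropy definition selects exactly the desired $\alpha$, is the delicate part of the proposition.
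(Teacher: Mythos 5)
Your first half (the lower bound $h_{\mu_l}(f)\geq l\log d$ via the ball estimate and the greedy selection of Proposition~\ref{classical_volume_argument}) is fine and is exactly how the paper gets it. The second half, however, has two genuine gaps. First, your ``hybrid'' construction for the intermediate integer values relies on $\mu_l(\P^m)>0$ for an invariant linear subspace $\P^m\subsetneq\P^k$; this is false: $T_f$ has continuous local potentials, so $\mu_l=T_f^l\wedge\omega^{k-l}$ puts no mass on pluripolar sets, in particular on any proper analytic subset. So concentrating the high-entropy dynamics on an invariant subvariety cannot be detected by the trace measure, and that step fails. (Your Latt\`es endpoint also misstates the mechanism: no endomorphism satisfies $f^*\omega=d\,\omega$ exactly and $T_f\neq\omega$ for Latt\`es maps; the correct argument, which is the paper's, is that for Latt\`es maps $\mu_l$ is absolutely continuous with respect to Lebesgue and with respect to $\mu_k$ by Berteloot--Dupont, so every set of positive $\mu_l$-measure has positive $\mu_k$-measure and carries entropy $k\log d$.)

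Second, and more importantly, the main content of the statement --- realizing \emph{every} real $\alpha\in[l\log d,k\log d]$, not just integer multiples of $\log d$ --- is left open in your proposal, and the interpolation/intermediate-value scheme you sketch (and yourself flag as delicate) is not viable, precisely because $h_{\mu_l}$ has no continuity in the map. The missing idea in the paper (for $l=1$, $k=2$, following Dujardin) is to import the continuum from dimension one: choose a rational map $h$ of degree $d$ on $\P^1$ carrying an ergodic invariant measure $\nu$ absolutely continuous with respect to Lebesgue measure with entropy $\beta$, where $\beta$ can be prescribed anywhere in $(0,\log d)$, and let $f$ be the degree $d$ endomorphism of $\P^2$ obtained from $\hat{f}=(h,h)$ on $\P^1\times\P^1$ by quotienting by $(z,w)\sim(w,z)$. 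The trace measure of the Green current of $\hat f$ is $\pi_1^*(\mu_h)\wedge\pi_2^*(\omega)+\pi_2^*(\mu_h)\wedge\pi_1^*(\omega)$, which, by absolute continuity of $\nu$ with respect to $\omega$, ``sees'' the invariant ergodic measure $\mu_h\otimes\nu$ of entropy $\log d+\beta$; this yields $h_{\mu_1}(f)=\log d+\beta$, and varying $\beta$ fills the whole interval (the endpoints being handled by Latt\`es maps on one side and by maps for which $\mu_1$ gives full mass to the complement of $\supp(T_f^2)$ on the other, using the upper bound of the preceding theorem). Without this (or an equivalent) construction, the second assertion of the proposition is not proved.
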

\begin{proof}
	We start with an easy example where $h_{\mu_l}(f)> l \log d$. Take for that $f$ a Lattès maps of $\P^k$. Then $\mu_l$ is absolutely continuous with respect to the Fubini-Study measure on $\P^k$ and with respect to $\mu_k$ (this is even a characterization of Lattes \cite{bertelootdupontlattes}). Now, any set of positive $\mu_l$ measure contains a set of positive $\mu_k$ measure so it gives an entropy $k\log d$.  	
	
	For simplicity, let us restrict ourselves to the case where $l=1$, $k=2$. We recall for that a construction in \cite[p. 603]{dujardindirection}. Take $h$ a rational map of $\P^1$ of degree $d$ which admits an ergodic measure $\nu$ absolutely continuous with respect to the Lebesgue measure of $\P^1$ of entropy $0< \beta <\log d$ (they are many such examples). Let $\mu_h$ denote the measure of maximal entropy $\log d$ of $h$. 
		
	Consider now the map $\hat{f}:=(h,h)$ acting on $\P^1 \times \P^1$. Taking the quotient of $\P^1 \times \P^1$ by $(z,w)\equiv (w,z)$, we get a holomorphic map $f$ of degree $d$ on $\P^2$. Let $\pi_i$ denote the projection to the $i$-th factor of $\P^1\times \P^1$. The trace measure of the Green current of $\hat{f}$ is $\pi_1^*(\mu_h) \wedge \pi_2^*(\omega) + \pi_2^*(\mu_h) \wedge \pi_1^*(\omega) $ which is absolutely continuous with respect to $\pi_1^*(\mu_h) \wedge \pi_2^*(\nu) + \pi_2^*(\mu_h) \wedge \pi_1^*(\nu) $. In particular, its entropy is $\log d + \beta$. Descending to $f$, we deduce that the entropy of the trace measure of the Green current is $\log d + \beta$.
	\end{proof}

\begin{remark} \normalfont To study the entropy on $\supp(T_f)\backslash \supp(\mu_f)$ for an endomorphism of $\P^2$, the first author defined saddle measures, under general assumptions \cite{thelinselle}.	The advantage of that approach is that we deal with nice ergodic, invariant measures but the support of such measure can be much smaller than $\supp(T_f)$.  
\end{remark}

\bibliographystyle{short}

\end{document}